\documentclass[12pt]{article}

\usepackage{footmisc}
\usepackage{graphicx}
\usepackage{epsfig}
\usepackage{epstopdf}
\usepackage{verbatim}
\usepackage{fancyhdr}
\usepackage{amsthm}
\usepackage{amsmath}
\usepackage{amssymb}
\usepackage{float}
\allowdisplaybreaks[4]
\usepackage{bm}
\usepackage{eucal}
\usepackage{color}
\usepackage[dvipsnames]{xcolor}
\usepackage[titletoc]{appendix}
\usepackage[font=small,labelfont=bf]{caption}  
\usepackage[lofdepth,lotdepth]{subfig}
\usepackage{multirow}
\usepackage{tikz}
\usetikzlibrary{shapes.geometric}
\usetikzlibrary{plotmarks}
\usepackage{pgfplots}
\usepgfplotslibrary{groupplots}

\usepackage[ruled,linesnumbered]{algorithm2e}

\graphicspath{ {Figures/} }

\usepackage{tikz}  
\usetikzlibrary{positioning}
\usetikzlibrary{pgfplots.groupplots}

\theoremstyle{plain}
\newtheorem{theorem}{Theorem}[section]

\theoremstyle{plainNoItalics}

\newtheorem{remark}[theorem]{Remark}

\numberwithin{equation}{section}

\renewcommand{\div}{\nabla \cdot}
\newcommand{\R}{\mathbb{R}}
\newcommand{\F}{\mathbf{F}}
\newcommand{\G}{\mathbf{G}}
\newcommand{\Eflux}{\bm{\psi}}

\newcommand{\n}{\mathbf{n}}
\newcommand{\x}{\bm{x}}
\newcommand{\wf}{\widehat{F}}
\newcommand{\be}{\boldsymbol{\beta}}

\newcommand{\V}{\mathcal{V}_h^r}

\newcommand{\gm}{\gamma_1}
\newcommand{\ga}{\gamma_0}

\newcommand{\mt}{\mathcal{T}}
\newcommand{\me}{\mathcal{E}}

\newcommand{\Fh}{\mathcal{F}_h}
\newcommand{\Fmh}{\mathcal{F}^\mcM_h}
\newcommand{\FMh}{\mathcal{F}^M_h}

\newcommand{\mcM}{\mathcal{M}}
\newcommand{\mcT}{\mathcal{T}}

\newcommand{\bc}{\Gamma}

\newcommand{\lamGLF}{\lambda_\xi}
\newcommand{\diffc}{C_\xi}

\newcommand{\KM}{I_M}
\newcommand{\M}{M}

\newcommand{\stab}{s}  
\newcommand{\um}{u^{\mathtt{min}}}  
\newcommand{\uM}{u^{\mathtt{max}}}  

\newcommand{\uhm}{\bar{u}_{h}|_M} 
\newcommand{\uhmo}{\bar{u}_{h}^{n}|_M} 
\newcommand{\uhmn}{\bar{u}_{h}^{n+1}|_M}
\newcommand{\uhtildemn}{\bar{\tilde u}_h^{n+1}|_M }
\newcommand{\uhml}{\bar{u}_{h}^{n+1}|_M} 

\newcommand{\uhin}{\bar{u}_{h}^{n}|_{M_i}}
\newcommand{\uhfn}{\bar{u}_{h}^{n}|_{M_1}}
\newcommand{\uhln}{\bar{u}_{h}^{n}|_{M_L}}

\newcommand{\mm}{\mathtt{min}} 
\newcommand{\MM}{\mathtt{max}} 

\newcommand{\hiF}{\widehat{F}_{\xi}} 
\newcommand{\loF}{\widehat{F}^\mathcal{L}_{\xi}} 
\newcommand{\liF}{\wf_{\xi}^{\text{MPP}}}

\allowdisplaybreaks[4]

\begin{document}

\title{High-Order Discontinuous Cut Finite Element Methods for Scalar Hyperbolic Conservation Laws}
\author{Pei Fu\footnotemark[1], 
Gunilla Kreiss\footnotemark[2], 
Zelin Xin\footnotemark[1],  
Sara Zahedi\footnotemark[3],
}
\footnotetext[1]{School of Mathematics,  Nanjing University of Aeronautics and Astronautics, 211106 Nanjing, China. E-mail: fu.pei@nuaa.edu.cn, zelin\_xin@nuaa.edu.cn.}
\footnotetext[2]{Division of Scientific Computing, Department of Information Technology, Uppsala University, SE-75105 Uppsala, Sweden. E-mail:  gunilla.kreiss@it.uu.se.}
\footnotetext[3]{Department of Mathematics, KTH Royal Institute of Technology, SE-10044 Stockholm, Sweden. Email:  sara.zahedi@math.kth.se.}
\maketitle

\begin{abstract}
In this paper, we present a family of high-order cut finite element methods based on the discontinuous Galerkin (DG) framework for scalar hyperbolic conservation laws on complex domains. Building on our previous work, we develop a multidimensional formulation that combines macro-element stabilization with flux limiting to obtain a scheme that preserves the maximum principle and remains robust with respect to arbitrary boundary cuts of the background mesh. The physical domain is embedded in a regular background mesh, which may produce arbitrarily small cut cells. To avoid the severe time step restrictions typically associated with such cells, ghost penalty stabilization terms are added on interior facets of macro-elements. The resulting method exhibits stability and accuracy properties similar to those of standard DG methods on fitted meshes. An $L^2$-stability result is derived for the semi-discrete scheme under both periodic and inflow-outflow boundary conditions. To enforce the maximum principle and suppress nonphysical oscillations, we adapt limiter techniques from standard DG methods to the CutFEM setting by defining  limiting parameters on macro-elements. In particular, we present a macro-element-based parameterized flux limiter together with adaptations of the Zhang-Shu bound-preserving limiter and the Barth-Jespersen slope limiter. Numerical experiments in two and three spatial dimensions demonstrate optimal convergence orders, preservation of the maximum principle, and accurate shock capturing without spurious oscillations, even for challenging cut configurations involving very small-cut cell intersections.
\end{abstract}

\bigskip
\noindent {\bf Keywords:} Scalar hyperbolic conservation laws; Cut finite element method;  Discontinuous Galerkin method; Stability estimate; Maximum principle preserving
%

\section{Introduction}
\label{sec:intro}
In this work, we consider scalar hyperbolic conservation laws in multiple spatial dimensions, which arise in many applications in computational fluid dynamics. In such applications, the fluid is often confined to domains with complex geometries, making the development of accurate and robust numerical methods particularly challenging. 
Let $\Omega \subset \R^d$,  $d\geq 2$, be a bounded convex domain with a polygonal or smooth boundary $\bc$.  We consider the scalar hyperbolic conservation law 
\begin{align}
&u_t+\nabla\cdot\F(u)=0, & x\in\Omega,\ t\in(0,T] \label{eq:model2D}\\
&u(x,0)=u_0(x),            & x\in\Omega \label{eq:initialcond2D} 
\end{align}
with suitable boundary conditions on $\bc$. 
Here $u=u(\x,t)$ denotes the conserved variable, and $\F(u)=\bigl(f_1(u),f_2(u),\cdots, f_d(u)\bigr)^T$: $\R\to\R^d$ is the flux function. We consider both periodic and inflow boundary conditions.   
For the latter, the boundary $\bc$ is decomposed into  inflow $\bc^-$ and  outflow $\bc^+$ parts, 
\begin{align}\label{eq:def:boundryflux}
\bc^-=\{\x\in\bc,\F'(u)\cdot\n_\bc(\x)\leq 0\}, \quad \bc^+=\{\x\in\bc,\F'(u)\cdot\n_\bc(\x) > 0\},
\end{align}
where $\n_\bc$ denotes the outward unit normal. On the inflow boundary, we impose 
\begin{align}\label{eq:def:boundrycon}
u(\x,t)=g(\x,t), \,\x \in\bc^-.
\end{align}
We assume there exists an invariant interval  containing all relevant states such that the normal characteristic speed has a uniform sign on each boundary portion. Since solutions of \eqref{eq:model2D} may develop discontinuities in finite time, weak solutions are considered.  However, weak solutions are generally not unique, and an additional entropy condition is required to select the physically relevant solution. Specifically, an entropy solution satisfies
$
 U(u)_t + \div \Eflux(u) \le 0, 
 $
 for every convex entropy function $U(u)$, i.e., $U''(u)\ge 0$, where the associated entropy flux $\Eflux(u)$ is defined by $\Eflux'(u)=U'(u)\F'(u)$.   Entropy inequalities provide a natural framework for the stability analysis of numerical methods. For scalar conservation laws, the quadratic entropy $U(u)=u^2/2$ is commonly used in the analysis of discontinuous Galerkin (DG) methods~\cite{jiang1994cell,shu2009discontinuous}, as it leads naturally to $L^2$-stability estimates. 
For this entropy, the corresponding entropy flux can be expressed in terms of a primitive $\G$ of the physical flux $\F$, defined by $\G'(u)=\F(u)$ such that $\Eflux(u)=u\F(u)-\G(u),$ up to an additive constant. This representation,  used in~\cite{jiang1994cell} forms the basis of the $L^2$-stability analysis carried out in this work.  The numerical approximation of conservation laws on complex geometries has been studied extensively.  Examples of numerical methods include the finite volume based h-box method \cite{berger2003h,berger2012simplified}, flux redistribution methods \cite{colella2006cartesian},  the inverse Lax-Wendroff method \cite{tan2010inverse}, domain of dependence stabilization methods \cite{engwer2020stabilized,may2022dod}, stabilized discontinuous Galerkin (DG) methods \cite{engwer2020stabilized}, and the state redistribution method \cite{berger2021state,giuliani2022two}.

In recent years, Cut Finite Element Methods (CutFEM) \cite{burman2025cut} have gained significant attention as accurate and robust approaches for problems posed on complex geometries. By allowing the use of general computational meshes, CutFEM avoids the need for admissible body-fitted meshes, which can be difficult to generate in practice. In this framework, the discretization can be formulated on a Cartesian background mesh that is straightforward to construct, and standard finite element basis functions can still be employed on standard elements. However, the unfitted nature of the mesh may give rise to cut elements with only a very small portion of the element intersecting the physical domain. Such cut elements may introduce severe time-step restrictions in explicit schemes and lead to ill-conditioned linear systems. 

To address these difficulties, we adopt a weak stabilization approach in which stabilization terms, such as ghost penalty terms, are incorporated into the variational formulation in order to ensure stability and robustness \cite{burman2010ghost,schoeder2020high,gurkan2019stabilized,gurkan2020stabilized,fu2021high}. An alternative strategy, commonly used in the context of DG methods, is cell merging, where small cut elements are combined with neighboring elements having larger intersections with the domain \cite{Johansson2013,kummer2017extended,modisette2010toward,muller2017high,Qin2013}. 

In our earlier work~\cite{fu2024bound}, we considered the one-dimensional case and showed that limiter-based strategies from DG methods can be extended to the CutFEM framework by applying the limiters on macro-elements~\cite{LarZah23}. The extension to higher spatial dimensions is far from trivial from an analytical perspective, since the domain boundary may cut the mesh in an arbitrary manner and the resulting macro-elements can have complex geometries.

In this work, we address these challenges and develop a high-order unfitted discretization for scalar conservation laws in multiple spatial dimensions that satisfies a discrete maximum principle and remains robust in the presence of discontinuities.  We propose a Cut Finite Element Method within a DG framework \cite{ShuDG4,shu2009discontinuous}, combined with explicit Runge-Kutta time discretization.  In standard DG methods, limiters are typically applied as postprocessing steps after each Runge--Kutta stage. Bound-preserving limiters \cite{zhang2010maximum,Zhang2011MaximumPrincipleSatisfyingAP,10.1098/rspa.2011.0153} enforce admissible solution bounds, while slope limiters \cite{Zhang2011MaximumPrincipleSatisfyingAP,zhang2010positivity} are used to suppress oscillations in the presence of shocks.  

The analysis of bound-preserving limiters typically relies on two key assumptions: (i) the cell average satisfies the maximum principle, and (ii) the extrema of the polynomial solution can be computed \cite{liu1996nonoscillatory,zhang2010maximum}, or alternatively, a cell-average decomposition based on exact quadrature rules is available \cite{Zhang2011MaximumPrincipleSatisfyingAP,10.1098/rspa.2011.0153}. While these assumptions can be ensured for standard DG methods, they become problematic in CutFEM due to the presence of cut elements. In particular, preserving bounds for the cell average may require severe time-step restrictions depending on the cut configuration, which we aim to avoid. At the same time, computing exact extrema of polynomial solutions on general macro-elements is impractical.  These difficulties motivate the development of a macro-element-based flux limiting strategy capable of enforcing admissible mean values independently of the cut configuration.

Flux limiting, originally developed for finite difference methods \cite{zalesak1979fully} and later extended to finite volume methods \cite{xu2014parametrized,CHRISTLIEB2015334} and to DG methods \cite{xiong2015high}, modifies numerical fluxes in order to guarantee that values or cell averages remain admissible. Within our framework, the flux limiter guarantees that the mean value on each macro-element satisfies the maximum principle without requiring the computation of exact extrema of the polynomial solution or a cell-average decomposition on general macro-elements. Compared with standard bound-preserving limiters, the proposed flux-limiting strategy substantially relaxes the time-step restrictions associated with cut elements. However, it guarantees admissibility only for macro-element mean values and not for the polynomial solution itself.
Once the macro-element mean values are controlled, a bound-preserving limiter can be applied at the final stage of each Runge--Kutta step, in order to enforce the maximum principle for the polynomial solution. In this step, the minimum and maximum values are approximated using the solution values at quadrature points on the underlying elements forming the macro-element. This strategy significantly reduces both computational cost and implementation complexity on general unstructured meshes while maintaining stability and accuracy.  To suppress oscillations near discontinuities and improve robustness for strong shock problems, we adapt the Barth-Jespersen slope limiter \cite{barth1989design} to the CutFEM setting by defining its limiting parameters on macro-elements. This slope limiter is applied at every stage of each Runge--Kutta step in the presence of discontinuities.   

The main contributions of this work are the introduction of a macro-element-based flux limiting strategy for high-order CutFEM discretizations of scalar conservation laws, the adaptation of slope and bound-preserving limiters from standard DG methods to unfitted meshes, and the derivation of energy stability results for the proposed scheme. Numerical experiments in both two and three spatial dimensions demonstrate that the scheme preserves the maximum principle, achieves the expected convergence rates, and produces oscillation-free solutions in the presence of shocks/discontinuities.

The paper is organized as follows. In Section~\ref{sec:scheme}, we present the proposed numerical scheme, together with the flux-limiting strategy and the modifications of the bound-preserving and slope limiters for unfitted meshes. In Section~\ref{sec:stability}, we establish the $L^2$-energy stability of the numerical solution $u_h$ for both linear and nonlinear fluxes. We also show that the proposed scheme with flux limiting guarantees that the macro-element mean values satisfy the maximum principle. Finally, in Section~\ref{sec:num}, numerical experiments demonstrate the expected convergence rates and confirm that the proposed method preserves the maximum principle.

\newcommand\nn{\boldsymbol{n}}
\newcommand\xx{\boldsymbol{x}}
\newcommand\hl{\mathcal{l}}
\newcommand\tf{\widetilde{F}}
\newcommand{\todo}[1]{{\color{blue}\bf#1}}
\newcommand{\xzlmodify}[1]{{\color{purple}\bf#1}}

\section{The numerical scheme}\label{sec:scheme}
This section presents the unfitted discretization for the scalar hyperbolic conservation law \eqref{eq:model2D}, including the mesh construction, finite element spaces, weak formulation, time discretization, and limiting procedures.

\subsection{Mesh and finite element space}
\label{sec:notation}
Let $\Omega$ be embedded in a background Cartesian mesh $\mt_{B,h}$ consisting of quasi-uniform, shape-regular triangles or rectangles. Define the active mesh $\mt_h$ and the corresponding active domain $\Omega_{\mt_h}$ by 
\begin{align}\label{eq:def:mesh}
\mt_h=\{K\in\mt_{B,h}: K\cap\Omega\neq\emptyset\}, \quad \Omega_{\mt_h}=\bigcup_{K\in\mt_h} K.
\end{align}
We also define the set $\me_h$ of all interior element facets (edges in 2D and faces in 3D) in $\mt_h$:
\begin{align}
\me_h=\{e=K_1\cap K_2:K_1,K_2\in\mt_h \text{ and } K_1\neq K_2\},
\end{align}
and introduce the sets of elements that are cut by the domain boundary $\bc$ (the cut elements):
\begin{align}
\label{eq:def:my_gamma}
\mt_\bc&=\{K\in\mt_h:K\cap\bc\neq \emptyset\}.
\end{align}

Let $\V$ be the finite element space on the active mesh $\mt_h$ consisting of piecewise polynomials of degree at most $r$:
\begin{align}\label{eq:def:space:V}
\V=\{v_h: v_h|_K\in P^r(K), \forall K\in\mt_h\},
\end{align}
where $P^r(K)$ is the space of polynomials of degree at most $r$ in the element $K$. Note that functions in $\V$ are discontinuous across interior facets of the active mesh. Furthermore, in the unfitted setting both facets and elements of the active mesh may be cut by the boundary $\Gamma$. 
We introduce separate notation for interior, boundary interfaces and complete set of interfaces:
\[
\mathcal F_h^e
=
\{e\cap\Omega : e\in \me_h\},
\quad
\mathcal F_h^\Gamma
=
\{K\cap\Gamma : K\in \mt_\bc\}, 
\quad \mathcal F_h
=
\mathcal F_h^e
\cup
\mathcal F_h^\Gamma.
\]

We introduce a unified notation for averages and jumps on all interfaces (both interior and boundary).  
For a quantity $w$ with traces $w^\pm$ on an interface $\xi\in\mathcal F_h$, we define the average and jump by
\begin{align}\label{eq:def:jump}
\{w\}_\xi = \frac{1}{2}\left(w^- + w^+\right), \quad [w]_\xi = w^+ - w^-.
\end{align} 
Here, $w^\pm$ denote traces with respect to the unit normal $\n_\xi$, where $\n_\xi$ is oriented from the ``$-$" side to the ``$+$" side.  On interior facets $\xi\in\mathcal F_h^e$, the traces are taken from the two neighboring elements according to the chosen orientation of $\n_\xi$. On boundary interfaces $\xi = K\cap\Gamma$, we set $\mathbf{n}_\xi=\mathbf{n}_\Gamma$.  For test functions $v_h\in\V$, the interior trace is denoted by $v_h^-$, while the exterior trace is defined by $v_h^+=0$. Consequently, 
$
[v_h]_\xi = -v_h^-.$ For the numerical solution $u_h$, the exterior trace is determined by the boundary condition. On inflow boundaries $\bc^-$, we set $u_h^+=g$, whereas on outflow boundaries $\bc^+$ we take $u_h^+=u_h^-$.

\subsubsection{Macro-elements}\label{sec:macroel}
For stabilization and limiting purposes, we introduce a macro-element partition $\mcM_h$ of the active domain 
$\Omega_{\mt_h}$ following  \cite{larson2021conservative}.  

We define the set of elements with a large $\Omega$-intersection by
\begin{equation}\label{eq:largeel}
\mt_h^L = \left\{ K \in \mt_h : \frac{|K \cap \Omega|}{|K|} \geq \delta \right\}.
\end{equation}
Here $0 < \delta \leq 1$ is a constant independent of both the element and the mesh size $h$. In the numerical simulations, we take $\delta = 0.2$. We assume that $h$ is sufficiently small so that $\mt_h^L$ is nonempty. Elements in $\mt_h^L$ are referred to as root cells. To each root cell, we associate a macro-element $M$ defined by
\begin{align}\label{eq:def:macro}
M = \bigcup_{j \in N_M} K_j, \quad K_j \in \mt_h,
\end{align}
where $N_M$ denotes the index set of elements included in $M$. Each macro-element consists either of a single root cell or of a root cell together with neighboring elements (reachable through a uniformly bounded number of interior facets) that have small intersections with $\Omega$. When the geometry is sufficiently smooth and well resolved, the macro-elements can be constructed so that they are comparable in size and consist of only a small number of neighboring elements; see Algorithm~1 \cite{larson2021conservative}.

For each nontrivial macro-element $M$ (i.e., consisting of more than one element), we define  by $\me_M$ the set of its interior facets. We then define
\begin{equation}
\me_{\mathrm{stab}} = \bigcup_{M \in \mcM_h} \me_M. 
\end{equation}
The set $\me_{\mathrm{stab}}$ collects all interior facets of nontrivial macro-elements and the stabilization terms introduced below will be evaluated on these facets. Note that $\me_{\mathrm{stab}}$ is a subset of the set of interior facets belonging to cut elements, since it excludes facets shared by distinct macro-elements.

\subsection{Semi-discrete unfitted DG method}\label{sec:semidiscrete}
In this section, we present a semi-discrete cut DG scheme for \eqref{eq:model2D}. Multiplying \eqref{eq:model2D} by a test function $v_h \in \V$, integrating by parts over each element in the physical domain, and adding stabilization terms yields the following semi-discrete formulation: find $u_h(\cdot,t) \in \V$ such that, for a.e.\ $t \in (0,T]$ and all $v_h \in \V$,
\begin{align}
&\left( u_{h,t},v_h \right)_{\Omega}+\gm \stab_{h,1}(u_{h,t},v_h)+a_h(u_h,v_h)+\ga \stab_{h,0}(u_h,v_h)=0,\label{eq:def:scheme}\\
&a_h(u_h,v_h)
=
-\sum_{K\in\mt_h} (\F(u_h), \nabla v_h)_{K\cap\Omega}
-\sum_{\xi \in \mathcal{F}_h}
\int_\xi \widehat{F}_{\xi}(u_h)\, [v_h]_\xi\,ds.\label{scheme:2D:ah}
\end{align}
The initial condition is defined by: find $u_h(0)=u_h(\cdot,0) \in \V$ such that
\begin{equation}\label{eq:def:initial}
(u_h(0),v_h)_{\Omega}+\gm \stab_{h,1}(u_h(0),v_h)=(u_0,v_h)_{\Omega}, \quad \forall v_h \in \V.
\end{equation}

For $m \in \{0,1\}$, the stabilization bilinear form $s_{h,m}$ is defined by
\begin{equation}\label{stable:2D}
\stab_{h,m}(u_h, v_h)= 
\sum_{e \in \me_{\mathrm{stab}}}  \alpha_M^{1-m} \sum_{k=0}^{r}  \omega_k h^{2k+m}\left< [\partial_n^k u_h]_e,[\partial_n^{k} v_h]_{e} \right>_e,
\end{equation} 
where $\omega_{k}=1/((2k+1)(k!)^2)$, and $\alpha_M$ denotes the maximum absolute value of the Jacobian $\partial(\F(u)\cdot \mathbf{n})/\partial u$, evaluated over all interfaces contained in the macro-element $\M$. The stabilization constants $\gm$ and $\ga$ are positive parameters. In the computations presented in this paper, we choose $\gm=0.25$ and $\ga=0.5$. We have not optimized these values and other choices also yield satisfactory numerical results. 

The sign of the interface term in \eqref{scheme:2D:ah} follows from the jump convention \eqref{eq:def:jump}. Indeed, summing the element boundary contributions arising from integration by parts 
$\sum_{K\in\mt_h}
\int_{\partial(K\cap\Omega)}
\F(u_h)\cdot\n_K\,v_h\,ds
$
, and expressing them in terms of traces on the interfaces $\xi\in\mathcal F_h$, while replacing the physical normal flux by a numerical flux $\widehat F_\xi$, yields the interface contribution
$-\sum_{\xi\in\mathcal F_h}
\int_\xi \widehat F_\xi(u_h)\,[v_h]_\xi\,ds.
$  
This motivates the definition of the bilinear form $a_h$.

The numerical flux on $\xi$ is defined by
\begin{equation}\label{eq:numericalFlux}
\widehat{F}_\xi(u_h) := \widehat{F}\big(u_h^-, u_h^+;\, \mathbf{n}_\xi \big),
\end{equation}
where as before $u_h^-$ and $u_h^+$ denote the interior and exterior states with respect to $\mathbf{n}_\xi$. 
As in standard DG methods \cite{ShuDG4}, we assume that $\widehat{F}(u,v;\mathbf{n})$ is a consistent, two-point Lipschitz flux satisfying:
\begin{enumerate}
\item[(F.1)] (Consistency) $\widehat{F}(u,u;\mathbf{n}) = \mathbf{F}(u)\cdot \mathbf{n}$,
\item[(F.2)] (Monotonicity) $\widehat{F}(u,v;\mathbf{n})$ is nondecreasing in $u$ and nonincreasing in $v$,
\item[(F.3)] (Lipschitz continuity) $\widehat{F}(u,v;\mathbf{n})$ is Lipschitz continuous,
\item[(F.4)] (Conservation) $\widehat{F}(u,v;\mathbf{n}) = -\widehat{F}(v,u;-\mathbf{n})$.
\end{enumerate}

We use the generalized Lax--Friedrichs flux as in \cite{li2020discontinuous}, which we apply on all interfaces, both interior facets and boundary interfaces.  The generalized Lax--Friedrichs flux is defined by   
\begin{align}\label{eq:def:LF}
\widehat{F}_\xi(u) =\widehat{F}(u^-,u^+;\n_\xi)
= \{\F(u)\}_\xi \cdot \n_\xi - \frac{\lamGLF}{2}[u]_\xi,
\end{align}
with 
$\lamGLF=\diffc \alpha_\xi,$ 
where 
$\alpha_\xi = \max_{u\in I_\xi} |\F'(u)\cdot\n_\xi|,$  
and $I_\xi$ is the interval between the two states used in the evaluation of the numerical flux on the facet $\xi$, i.e., $I_\xi= [\min(u^-,u^+),\max(u^-,u^+) ]$.  The parameter $\alpha_\xi$ may be chosen either locally or globally. For a high-order scheme, we use a local choice, whereas a global choice (same constant $\alpha_\xi$ on all facets) is used in the low-order scheme associated with the flux limiter introduced in Section \ref{sec:fluxlim}.  On interior facets and outflow boundary facets we set $\diffc=1$, recovering the standard Lax--Friedrichs flux. On inflow boundary facets $\xi\in\Gamma^-$, we need $C_\xi>$1 to prove stability. 
The numerical flux at the physical inflow boundary $\bc^-$ is then
\begin{equation}
\wf\big(w^-, g;\, \n_\xi \big)=\frac{1}{2} \left( F(w^-)+F(g) \right)\cdot\n_\xi
-
\frac{\lamGLF}{2} \left( g-w^- \right), 
\label{eq:boundary-gllf-flux}
\end{equation}
with $\lamGLF=\diffc \max_{u\in[\min\{w^-,g\},\,\max\{w^-,g\}]} |f'(u)|$ and $\diffc>1$. From  Lemma~2.1 in  \cite{li2020discontinuous}, this flux satisfies (F.1)-(F.4). Hence, we use the standard Lax--Friedrichs flux on interior facets and a more diffusive generalized Lax--Friedrichs flux on inflow boundary facets. The additional diffusion provides a penalty term that allows the mixed term generated by nonhomogeneous boundary data to be absorbed in the stability analysis. 
In the numerical simulations we did not observe any differences using the standard Lax--Friedrich flux (i.e., $C_\xi=1$ on all facets) compared to using the generalized Lax-Friedrich flux on the inflow boundary with $\diffc=2$. 

Using the definitions of average and jump on interfaces $\xi \in \mathcal{F}_h$, the bilinear form $a_h$ can be written as
\begin{align*}
a_h(w,v)=
-\sum_{K\in\mt_h}(\F(w), \nabla v)_{K\cap\Omega}
-\sum_{\xi \in \mathcal{F}_h}
\int_{\xi}
\left(\{\F(w)\}_\xi \cdot \mathbf{n}_\xi  - \frac{\lamGLF}{2}[w]_\xi\right)[v]_\xi\,ds.
\end{align*}

\subsection{Basic fully-discrete scheme}
We define the stabilized $L^2$-inner product
\begin{equation}
(u_h, v_h)_s := (u_h, v_h)_\Omega + \gm \stab_{h,1}(u_h, v_h),  
\quad \forall v_h \in \V.
\end{equation}
With this notation, the semi-discrete formulation \eqref{eq:def:scheme}  can be written equivalently as: find $u_h(\cdot,t) \in \V$ such that, for a.e. $t \in (0,T]$,
\begin{align}
(u_{h,t}, v_h)_s = -\left(a_h(u_h,v_h)+\ga \stab_{h,0}(u_h,v_h)\right), 
\quad \forall v_h \in \V. 
\end{align}

Applying the forward Euler method for the time discretisation, and denoting by $u_h^{n}$ the numerical solution at time $t=t_n$, for $n=0, 1, \cdots, N_T$, 
 the fully discrete scheme reads: given $u_h^{n} \in \V$ find $u_h^{n+1} \in \V$ such that
\begin{equation}\label{eq:def:scheme:fullyscheme}
(u_h^{n+1}, v_h)_s
= (u_h^n, v_h)_s
- \Delta t \left( a_h(u_h^n,v_h)+\ga \stab_{h,0}(u_h^n,v_h) \right),  
\quad \forall v_h \in \V,
\end{equation}
with the initial solution given by: find $u_h^{0} \in \V$ such that
\begin{equation}
(u_{h}^0, v_h)_s=(u_0,v_h)_{\Omega}, \quad \forall v_h \in \V.
\end{equation}
Here $\Delta t=t_{n+1}-t_n$ denotes the time-step and $N_T$ is the total number of time steps.

For the numerical experiments we employ the third-order strong stability preserving Runge--Kutta (SSP-RK) method \cite{gottlieb2001strong}. This method can be written as a convex combination of forward Euler updates:
\begin{align}
u_h^{(1)} &= u_h^n + \Delta t L(u_h^n), \notag\\
u_h^{(2)} &= u_h^n + \frac{1}{4}\Delta t \left(  L(u_h^n) + L(u_h^{(1)}) \right), \notag\\
u_h^{n+1} &= u_h^n + \frac{1}{6}\Delta t \left( L(u_h^n)+ L(u_h^{(1)})  + 4L(u_h^{(2)})\right), \label{eq:def:sspRK3}
\end{align}
applied to the system $u_{h,t} = L(u_h)$, where $L(u_h)$ is defined by
\begin{equation}
(L(u_h), v_h)_s = - a_h(u_h, v_h) - \ga \stab_{h,0}(u_h, v_h),
\quad \forall v_h \in \V.
\end{equation}

\subsection{Flux-limiting}\label{sec:fluxlim}
Entropy solutions to scalar conservation laws \eqref{eq:model2D}  satisfy a maximum principle.   In the case of a Cauchy problem, this reads
\begin{equation}
 \text{if } u_0(\x) \in [\um, \uM], \text{ then } u(\x,t) \in [\um, \uM], \text { for } t>0, \label{eq:maxprinciple}
\end{equation}
where $\um$ and $\uM$ denote the minimum and maximum values of $u_0(\x)$. 

A numerical  scheme should be maximum principle preserving (MPP), but the fully discrete scheme from the previous section is in general not MMP.  Let $\tilde u_h^{n+1}$ denote the solution produced by the fully discrete scheme at time $t=t_{n+1}$. We apply a correction to obtain a solution that satisfies a discrete maximum principle.  Our approach enforces the maximum principle locally on each macro-element by modifying the mean value on each macro-element. Recall the macro-element partition from Section~\ref{sec:macroel}. For each macro-element $M$, we define the corrected solution by 
\begin{equation}\label{eq:def:limitsolution:mean}
u_h^{n+1}|_{K_j} = \tilde u_h^{n+1}|_{K_j}+\delta_M,  \qquad j \in N_M.
\end{equation}
Here $\delta_M$ is a constant on $M$ chosen so that the mean value of the corrected solution on $M$ satisfies the maximum principle. 
$N_M$ denotes the index set of elements $K_j$ in the active mesh $\mt_h$ that belong to $M$. 

For a macro-element $M \in \mcM_h$, we denote its part in $\Omega$ by
\begin{equation}
\KM=M\cap\Omega.
\end{equation}
We denote by $\bar{u}_{h}$ the piecewise constant function of macro-element mean values, 
\begin{equation}\label{eq:def:meanvalues:M}
\bar{u}_{h}|_M =\frac{1}{|\KM|}\sum_{j\in N_M}\int_{K_j \cap \Omega} u_h dx. 
\end{equation}

Let $\Fmh\subset\Fh$ denote the set of macro-element interfaces, consisting of facets shared by two distinct macro-elements and the boundary $\Gamma$:
\begin{align}\label{eq:def:fmh}
\Fmh = \{e\cap\Omega:e=M_1\cap M_2, M_1\neq M_2,  M_1,M_2 \in \mcM_h \} \cup \{K\cap\Gamma: K\in\mcT_\Gamma\}.
\end{align}
The MPP flux limiting procedure corresponds to modifying the flux on each facet $\xi \in\Fmh$ to control the macro-element mean values. We now describe the flux limiting procedure on each macro-element. Assume that for a macro-element $M$ the facets in $\Fmh$ that have an intersection with M are denoted by $\FMh$. 

Taking $v_h=1$ on the macro-element $M$ and $v_h=0$ elsewhere in \eqref{eq:def:scheme:fullyscheme}, we obtain the following formula for the macro-element mean value:
\begin{align}\label{eq:scheme:mean:low}
\uhtildemn=\uhmo- \frac{\Delta t}{|\KM|}\sum_{\xi \in \FMh} \int_{\xi} \wf_{\xi}( u_h^n) ds.  
\end{align} 
Here $\wf_{\xi}$ is the numerical flux from \eqref{eq:numericalFlux} and \eqref{eq:def:LF}. 
Following the parameterized MPP limiter of \cite{xiong2015high}, we modify the numerical fluxes used in the macro-element mean-value update. On each facet $\xi \in\FMh$  (excluding interior facets of the macro-element on which ghost penalty stabilization is applied) we modify the flux and compute
 \begin{align}\label{eq:scheme:meanmacro}
\bar{u}_{h}^{n+1}|_M&=\uhmo- \frac{\Delta t}{|\KM|}\sum_{\xi \in \FMh} \int_{\xi} \liF(u_h^n) ds,\\
\liF(u_h) &= \theta_\xi \hiF(u_h) + (1-\theta_\xi) \loF(\bar u_h).\label{eq:def:limitflux}
\end{align}
In this paper we use a Lax-Friedrichs flux, see \eqref{eq:def:LF} where for the low order flux $\loF$ we use a globally determined value $\alpha_\xi$ of the maximum absolute Jacobian value, and for the high order flux $\hiF$ we use a local choice of $\alpha_\xi$. For the Euler method the difference between the high order flux and the low order flux is only in this parameter and in the argument to the flux, since the low order flux is applied on the macro-element mean values $\bar u_h^n$. 

 For each macro-element $M$ we thus set $\delta_M$ in \eqref{eq:def:limitsolution:mean} to
\begin{align}\label{eq:deltaM}
 \delta_M=\bar{u}_{h}^{n+1}|_M-\uhtildemn
 =\frac{\Delta t}{|\KM|}\sum_{\xi \in \FMh}\int_{\xi} (1-\theta_\xi) \left(\hiF(u_h^n)-\loF(\bar u_h^n) \right) ds.
\end{align}
In~\eqref{eq:def:limitflux}, the parameter $\theta_\xi \in [0,1]$ is constant on each interface $\xi \in\Fmh$ and chosen to ensure that the maximum principle holds for the macro-element mean value. For the lowest order polynomial spaces, piecewise constants, i.e. $r=0$, we choose $\theta_\xi=0$. For higher order polynomial spaces, the selection of the parameter $\theta_\xi$ is described in the next section. In contrast to the standard setting, this modification of the flux to $\liF(u_h)$ is done on interfaces $\xi\in\Fmh$ shared by macro-elements and on the boundary, rather than on all facets.

The extension to the third-order SSP-RK scheme \eqref{eq:def:sspRK3} is straightforward. Since the flux correction is applied only at the final RK stage, the only modification compared to the forward Euler method is that the high-order flux $\hiF(u_h^n)$ is replaced by the Runge--Kutta flux
\begin{align}
\wf^{rk}_{\xi}(u_h)=\frac{1}{6}\wf_{\xi}(u_h^n)+\frac{1}{6}\wf_{\xi}(u_h^{(1)})+\frac{4}{6}\wf_{\xi}(u_h^{(2)} ).\label{eq:def:highflux:RK}
\end{align}
Here $u_h^{(1)}$ and $u_h^{(2)}$ are defined by \eqref{eq:def:sspRK3}. The low order flux is exactly the same as before and uses the mean values of $u_h^n$ on $M$, i.e., $\loF(\bar u_h^n)$. After the third stage of the third order RK scheme,  we apply the flux limiting to correct the solution. The flux limiting procedure for the RK method is stated in Algorithm~\ref{algo:mpp}. 
\begin{algorithm}[!htb]
		\caption{Flux limiting}\label{algo:mpp}
		\textbf{Function} \text{Flux limiting}$(u_0, u_1, u_2,u_3, c_0, c_1, c_2, \tau)$ \\
\For {each macro-element $M$}
{
\For {each $\xi \in \FMh$}
{
Compute the high-order Runge--Kutta flux $\wf^{rk}_{\xi}=\sum_{j=1}^{3} c_j \wf_{\xi}(u_j)$  \\ 
Compute the low order numerical flux $\loF(\bar u_0)$ with $\bar u_0$ as in \eqref{eq:def:meanvalues:M}  \\
Compute $\theta_{\xi}^{M}$ following  \eqref{eq:def:Bm}-\eqref{eq:def:LambdaM}  \\
}
}
\For {each $\xi \in \Fmh$}{ Compute $\theta_\xi$ from \eqref{eq:def:thetai}. \\}
\For {each macro-element $M$}
{Compute $\delta_M$: 
$ \delta_M =\frac{\tau}{|\KM|}\sum_{\xi \in \FMh}\int_{\xi} (1-\theta_\xi) \left(\wf^{rk}_{\xi}-\loF(\bar u_0) \right) ds$. \\
\For { each element $j \in N_M$}
 {Compute the corrected solution as in  \eqref{eq:def:limitsolution:mean}: 
$u_h^{n+1}|_{K_j} =  u_3|_{K_j}+\delta_M$.\\
}
}
\Return $u_h^{n+1}$;		
	\end{algorithm}	
 
\newcommand{\Bm}{\mathbb{B}^\mm}
\newcommand{\BM}{\mathbb{B}^\MM}
\newcommand{\Cm}{\mathbb{C}^\mm}
\newcommand{\CM}{\mathbb{C}^\MM}

 \subsubsection{Parameters in the flux limiting}\label{sec:fluxlimit:para}
 In this subsection, we describe how to choose the parameters $\theta_\xi$, $\xi\in\Fmh$, such that the macro-element average defined by \eqref{eq:scheme:meanmacro} satisfies the maximum principle, i.e., on each macro-element $M \in \mcM_h$
\begin{align}\label{eq:def:fluxlimit:ieq}
\um \leq \uhmn=
\uhmo- \frac{\Delta t}{|\KM|}\sum_{\xi \in \FMh} \int_{\xi} \liF(u_h^n) ds
 \leq \uM.
\end{align}
In Section~\ref{sec:stability}, we prove in Theorem~3.5 that if the solution at time level $n$ is piecewise constant on the active mesh and its macro-element average, see equation \eqref{eq:def:meanvalues:M},  satisfies the maximum principle, then the solution at time level $n+1$ also satisfies the maximum principle. Furthermore, for solutions with polynomial degree $r\ge1$, setting $\theta_\xi=0$ in \eqref{eq:def:limitflux} yields a low-order update that satisfies the maximum principle, 
 that is for all macro-elements $M$ we have
\begin{align}\label{eq:def:Bm}
\Bm:=& \uhmo- \frac{\Delta t}{|\KM|}\sum_{\xi \in \FMh}\int_{\xi} \loF(\bar u_h^n)\, ds - \um \;\geq 0,\\
\BM:=&\uM -\uhmo+ \frac{\Delta t}{|\KM|}\sum_{\xi \in \FMh} \int_{\xi} \loF(\bar u_h^n) \, ds \;\geq 0.\label{eq:def:BM}
\end{align}
To retain as much of the high-order accuracy as possible, we therefore seek to choose $\theta_\xi$ as close to $1$ as permitted by the maximum-principle constraints.  
At each time level,  define
\begin{align}\label{eq:def:cm}
&\Cm := \frac{\Delta t}{|\KM|}\sum_{\xi \in \FMh}   \mathbb{F}(\beta_\xi),\quad
  \CM := \frac{\Delta t}{|\KM|}\sum_{\xi \in \FMh}  \mathbb{F}\!\left(-\beta_\xi \right), \text{ with }\\
&\beta_\xi=  \int_{\xi} (\wf^\mathcal{H}_{\xi}(u_h)-\loF(\bar u_h^n))\, ds, \quad 
\mathbb{F}(\beta_\xi) =
\begin{cases}
\beta_\xi, & \beta_\xi > 0, \\
0, & \text{otherwise}.
\end{cases}
\end{align}
with 
\begin{align}
\wf^\mathcal{H}_{\xi}(u_h)=
\begin{cases}
\hiF(u_h^n), & \textrm{Euler} , \\
 \frac{1}{6}\wf_{\xi}(u_h^n)+\frac{1}{6}\wf_{\xi}(u_h^{(1)})+\frac{4}{6}\wf_{\xi}(u_h^{(2)}), & \textrm{SSP-RK}.
\end{cases}
\end{align}

We first compute candidate limiter parameters for each facet $\xi$ with respect to the macro-element $M$ as 
\begin{align}\label{eq:def:lambdai}
\Lambda^\mm_{0,\xi} =
	 	\begin{cases}
	 		\min \left\{1,\dfrac{\Bm}{\Cm + \epsilon}\right\}, &  \beta_\xi \geq 0
			\\[1ex]
	 		1, & \text{else}
	 	\end{cases}
, \,
\Lambda^\MM_{0,\xi} =
	 	\begin{cases}
	 		\min \left\{1,\dfrac{\BM}{\CM + \epsilon}\right\}, & \beta_\xi \leq 0
			\\[1ex]
	 		1, & \text{else}.
	 	\end{cases} 
\end{align}
Here $\epsilon = 10^{-40}$ is a small constant  introduced to avoid division by zero. 
The parameters $\Lambda^\mm_{0,\xi}$ and $\Lambda^\MM_{0,\xi}$ are constructed to enforce the lower and upper bounds,  
$\uhmn \geq \um$ and $\uhmn \leq \uM$, respectively.  We then combine these bounds to obtain a single admissible local parameter for each facet of the macro-element:
\begin{align}\label{eq:def:LambdaM}
\theta_{\xi}^{M} = \min\!\left\{ \Lambda^\mm_{0,\xi}, \Lambda^\MM_{0,\xi} \right\}.
\end{align}
This procedure is repeated for all macro-elements. If $\xi\in\Fmh$ is part of a facet shared by two macro-elements, say $M$ and $M_i$, we take the neighbouring macro-element $M_i$ into account when we set the limiting parameter $\theta_\xi$. Thus, we set 
\begin{align}\label{eq:def:thetai}
	\theta_\xi = 
	\begin{cases}
	 		\min\!\left\{ \theta_{\xi}^{M}, \theta_{\xi}^{M_{i}}\right\}, & \textrm{$\xi$ is part of a facet shared by two macro-elements,} 
						\\[1ex]
	 		\theta_{\xi}^{M}, &  \textrm{ $\xi$ is part of the boundary $\Gamma$.} 
	 	\end{cases} 
\end{align}

\newcommand{\Muhm}{\bar{u}_{h}^{\MM}}  
\newcommand{\muhm}{\bar{u}_{h}^{\mm}}  
\newcommand{\Mqhm}{ q_{h}^{\MM}}  
\newcommand{\mqhm}{q_{h}^{\mm}}  

\subsection{Slope and bound-preserving limiting procedures}
For problems with discontinuous solutions, the MPP flux limiter alone is generally insufficient to suppress oscillations arising from the high-order approximation, as is also the case for standard DG methods, see \cite{10.1098/rspa.2011.0153, xiong2015high}. We also note that the MPP flux limiter only guarantees that mean values obey the maximum principle, both in a standard fitted DG setting and in our cut setting.  Additional limiting procedures are therefore required.  
Here we introduce versions of the Barth--Jespersen (BJ) \cite{barth1989design} slope limiter and the Zhang--Shu bound-preserving (BP) limiter, which can be used in the unfitted DG setting.  
 
\subsubsection{Slope limiter}\label{sec:slopelimiter}
To control oscillations near discontinuities, we adapt the Barth--Jespersen limiter \cite{barth1989design}.  Consider a macro-element $\M \in  \mcM_h$. After each Runge-Kutta stage, we modify the solution in each sub-element $K_j\subset \M,j \in N_M$ by employing the BJ limiter, to compute a new limited solution $u_h^{s}$ by
\begin{equation}\label{eq:slopelimit:uh}
	u_h^{s}\big|_{K_j} \;=\; \varpi_s\, u_h\big|_{K_j}+ (1-\varpi_s)\,\uhm,
		\qquad
	K_j\subset \M,
\end{equation}
and $\varpi_s$ is chosen as follows.

Let $S_{\M}$ denote the set consisting of the macro-element $\M$ and all neighboring macro-elements that share at least one vertex with $\M$. 
Let $\mathcal{Q}_{\M}$ denote the set of quadrature points associated with all sub-elements $K_j\subset \M$, $j \in N_M$. Define
\begin{equation}
\Muhm=\max_{\M \in S_{\M}} \bar u_h|_{\M}, 
\muhm=\min_{\M \in S_{\M}} \bar u_h|_{\M}, 
\Mqhm=\max_{x \in \mathcal{Q}_{\M}} u_h(x),
\mqhm=\min_{x \in \mathcal{Q}_{\M}} u_h(x).
\end{equation}
We determine the parameter \(\varpi_s\) as
\begin{equation}\label{eq:def:coefficient in BJ limiter}
\varpi_s=\min \left\{ \varpi_s^{\MM},\varpi_s^{\mm} \right\},
\end{equation}
where
\begin{equation*}
\begin{aligned}
\varpi_s^{\MM}&=
\begin{cases}
\dfrac{\Muhm-\uhm}{\Mqhm-\uhm+\varepsilon}, & \Mqhm>\Muhm, \\[1.2ex]
1,  & \Mqhm\le \Muhm,
\end{cases}
\,
\varpi_s^{\mm}=
\begin{cases}
\dfrac{\muhm-\uhm}{\mqhm-\uhm-\varepsilon}, & \mqhm<\muhm, \\[1.2ex]
1, & \mqhm\ge \muhm.
\end{cases}
\end{aligned}
\end{equation*}
Here $\varepsilon = 10^{-40}$ is added to avoid division by zero.  Applying \eqref{eq:slopelimit:uh} then yields the limited solution $u_h^{s}$. We note that the limiter is implemented as a Barth--Jespersen-type scaling procedure on each macro-element. The full polynomial deviation from the macro-element average is multiplied by a single factor, instead of applying the limiter only to the slope components.  
For strong shock problems, the limiter may alternatively be applied only to the slope components while setting the higher-order moments to zero.

\subsubsection{Bound-preserving limiter} \label{sec:boundpreserv}
As in standard DG methods for hyperbolic conservation laws on fitted meshes,  the MPP flux limiter guarantees only that the mean values are within the interval $[\um,\uM]$, in our case only on macro-elements.   However, a high-order polynomial solution within each element, as well as the corresponding element-wise mean values, may still violate these bounds, even if the macro-element averages satisfy the maximum principle.  To address this issue, we additionally apply the Zhang-Shu BP limiter \cite{Zhang2011MaximumPrincipleSatisfyingAP} at the final stage of each RK step. This guarantees that the numerical polynomial solution remains bounded and thereby enhances the stability. In our setting, the limiting parameter is computed on each macro-element rather than on each individual element.  

Consider a macro-element $M\in\mcM_h$. At the final stage of each RK step, for macro-element $M$ we define the limited solution
\begin{align}\label{eq:def:bplimit:solution}
{u}^b_h|_{K_j}=\varpi_{{b}}{u}_h|_{K_j}+(1-\varpi_{{b}})\uhm,	 \qquad K_j\subset \M ,
\end{align}
where $\varpi_b$ is constant on each macro-element and computed as
\begin{align}\label{eq:def:varpi_b}
		\varpi_b
		= \min\left\{
		1,\;
		\frac{\uM - \uhm}{\Mqhm - \uhm},\;
		\frac{\uhm - \um}{\uhm - \mqhm}
		\right\}.
\end{align}

\subsection{Fully discrete maximum-principle-preserving scheme}
The complete fully discrete algorithm, based on the SSP-RK time discretization and flux limiting is summarized in Algorithm \ref{algoim:cutdg scheme}. At each time step, the solution is advanced using the third-order SSP-RK method \eqref{eq:def:sspRK3}. For problems containing discontinuities, the slope limiter from Section~\ref{sec:slopelimiter} may be applied after each stage of the first two stages. After the third stage, we first apply the flux-limiting procedure described in Sections~\ref{sec:fluxlim} to ensure that the macro-element mean values satisfy the maximum principle. Subsequently, the slope limiter is again applied in the presence of discontinuities. Finally, to enforce the maximum principle for the polynomial solution, the bound-preserving limiter from Section~\ref{sec:boundpreserv} is applied. The resulting numerical solution satisfies the maximum principle and serves as the approximation at the new time level. 
\begin{algorithm}[!htb]
\caption{Fully discrete MPP CutDG schem}\label{algoim:cutdg scheme}
\KwData{Given the initial condition $u_0$}
\KwResult{Output the MPP approximation at the final time $T$}
Set the initial time $t = 0$ and $n = 0$, and initialize $u_h^0=u_h(\cdot,0)$ by \eqref{eq:def:initial};\\
\While{$t \leq T$}{
			Compute the time-step size $\Delta t$;  
			\tcp{Third-order SSP Runge--Kutta discretization}
			$u_h^{(1)} = \text{RK Stage}(u_h^n, 0, 0, 1, 0, 0, \Delta t)$ \tcp*{first stage}
			If (slope limiting): $u_h^{(1)}$= BJ limiter ($u_h^{(1)}$)
			
			$u_h^{(2)} = \text{RK Stage}(u_h^n, u_h^{(1)}, 0, 1, 1, 0, \Delta t/4)$ \tcp*{second stage}
			If (slope limiting): $u_h^{(2)}$= BJ limiter ($u_h^{(2)}$)
			
			$u_h^{(3)} = \text{RK Stage}(u_h^n, u_h^{(1)}, u_h^{(2)}, 1, 1, 4, \Delta t/6)$ \tcp*{third stage }			
			$u_h^{(3)} = \text{flux limiting}(u_h^n, u_h^{(1)}, u_h^{(2)}, u_h^{(3)}, 1, 1, 4, \Delta t/6)$\tcp*[l]{flux limiter}
			
			If (slope limiting): $u_h^{(3)}$= BJ limiter ($u_h^{(3)}$)\\
			If (bound-preserving limiter) $u_h^{(3)} = \text{Bound-preserving limiter}(u_h^{(3)}$) \tcp{Limit the polynomial solution to prescribed bounds.}
			
			Set $u_h^{n+1}=u_h^{(3)}$;
			$n \leftarrow n+1$;
			$t \leftarrow t+\Delta t$;
		}
\end{algorithm}

\section {Main analytical results}
In this section, we establish the $L^2$-stability of the semi-discrete scheme and prove the MPP property of the fully discrete scheme. We also derive an a priori error estimate for the linear advection equation.

\subsection{$L^2$-stability of the semi-discrete scheme}\label{sec:stability}
In this subsection, we establish $L^2$-stability of the semi-discrete scheme using entropy arguments, following the analysis of DG methods in  \cite{jiang1994cell, shu2009discontinuous}. Throughout the analysis, we consider the quadratic entropy function $U(u) = \frac{1}{2}u^2$.

\begin{theorem}
Consider the semi-discrete scheme~\eqref{eq:def:scheme}--\eqref{eq:def:initial} for the scalar conservation law \eqref{eq:model2D}.  
Assume that $\F\in C^1(\mathbb R)^d$, numerical flux $\wf$ is the generalized Lax-Friedrichs flux \eqref{eq:def:LF} and  $\gm \ge 0$, $\ga \ge 0$. Then, in the case of periodic boundary conditions, the semi-discrete solution $u_h$ satisfies the  $L^2$-stability estimate
\begin{equation}\label{eq:def:stab:period}
(u_h(\cdot,T),u_h(\cdot,T))_s\leq  (u_h(\cdot,0),u_h(\cdot,o))_s. 
\end{equation}  
For inflow-outflow boundary conditions, we assume the generalized Lax-Friedrichs parameter satisfies $\diffc>1$ on $\xi\subset\bc^-$. Then there  exists a constant $C>0$, independent of $u_h$, $g$, and the cut configuration, such that  
\begin{equation}\label{eq:def:stab:inflow}
(u_h(\cdot,T),u_h(\cdot,T))_s\leq  (u_h(\cdot,0),u_h(\cdot,o))_s+
C\int_0^T \|g\|_{\bc^-}^2\,dt.  
\end{equation}  
\end{theorem}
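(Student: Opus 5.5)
Take $v_h=u_h$ in \eqref{eq:def:scheme}. Since the stabilization forms are symmetric and time-independent, this gives
\[
\tfrac12\tfrac{d}{dt}(u_h,u_h)_s + a_h(u_h,u_h) + \ga\, \stab_{h,0}(u_h,u_h)=0 .
\]
The term $\ga \stab_{h,0}(u_h,u_h)=\ga\sum_{e}\alpha_M\sum_k\omega_k h^{2k}\|[\partial_n^k u_h]_e\|_e^2$ is nonnegative, so it can be dropped. The whole proof therefore reduces to bounding $a_h(u_h,u_h)$ from below. In the periodic case we need $a_h(u_h,u_h)\ge 0$; in the inflow case we need $a_h(u_h,u_h)\ge -C\|g\|_{\bc^-}^2$. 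Integrating in time from $0$ to $T$ then gives \eqref{eq:def:stab:period} and \eqref{eq:def:stab:inflow}.

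\textbf{Volume term via the entropy flux.} Follow \cite{jiang1994cell}. With $\G'=\F$ we have $\F(u_h)\cdot\nabla u_h=\nabla\cdot \G(u_h)$ on each $K\cap\Omega$, so by the divergence theorem
\[
-\sum_K(\F(u_h),\nabla u_h)_{K\cap\Omega}=-\sum_K\int_{\partial(K\cap\Omega)}\G(u_h)\cdot\n_K\,ds .
\]
This holds on arbitrary cut elements $K\cap\Omega$ and needs no shape regularity, which is why the bound is independent of the cut configuration. The ghost-penalty facets contribute nothing here. Regrouping the boundary contributions by interfaces $\xi\in\Fh$ with the jump convention \eqref{eq:def:jump}, and using $v_h^+=0$ on $\Gamma$, gives
\[
a_h(u_h,u_h)=\sum_{\xi\in\Fh^e}\int_\xi \Theta_\xi\,ds+\sum_{\xi\in\Fh^\Gamma}\int_\xi\big(\hiF(u^-,u^+)u^- - \G(u^-)\cdot\n_\xi\big)ds,
\qquad
\Theta_\xi=[\G(u_h)]_\xi\cdot\n_\xi-\wf_\xi[u_h]_\xi .
\]
On interior facets $\Theta_\xi=\int_{u^-}^{u^+}\big(\F(s)\cdot\n-\wf(u^-,u^+)\big)\,ds\ge 0$. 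This is the standard monotone-flux argument: by (F.1) and (F.2), $\wf(u^-,u^+)\le \F(s)\cdot\n$ for $s$ between $u^-$ and $u^+$ when $u^-\le u^+$, and the reverse holds otherwise. For periodic boundary conditions every $\xi$ is an interior facet, so $a_h(u_h,u_h)\ge0$.

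\textbf{Boundary terms (the main obstacle).} On outflow interfaces $u^+=u^-$, so the integrand reduces to $\Eflux(u^-)\cdot\n=(u\F\cdot\n-\G\cdot\n)(u^-)$. Its derivative in $u$ is $u\,\F'(u)\cdot\n$, and using $\Eflux(0)=0$ (the additive constant is fixed so that $\G(0)=0$) together with $\F'\cdot\n>0$ on the invariant interval, this term is nonnegative. The hard case is inflow, where $u^+=g$. I would write the integrand as
\[
\Big(\int_{u^-}^{g}\big(\F(s)\cdot\n-\wf(u^-,g)\big)ds\Big)\;+\;\big(\G(g)\cdot\n-g\,\wf(u^-,g)\big),
\]
so that the first bracket is $\ge0$ exactly as for interior facets. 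Next, expand $\wf$ from \eqref{eq:boundary-gllf-flux}. Since $\F'\cdot\n\le0$, the quantity $\G(g)\cdot\n-g\F(g)\cdot\n=-\Eflux(g)\cdot\n$ is controlled by $\alpha_\xi g^2$. The leftover mixed terms, $-\tfrac12 g(\F(u^-)-\F(g))\cdot\n+\tfrac{\lamGLF}{2}g(g-u^-)$, are bounded by $C\alpha_\xi|g|\,|g-u^-|$ using Lipschitz continuity. To absorb them I would use the extra dissipation of the generalized flux. A sharper lower bound of the first bracket, $\int_{u^-}^g(\ldots)\ge\tfrac{\lamGLF-\alpha_\xi}{2}(g-u^-)^2=\tfrac{(C_\xi-1)\alpha_\xi}{2}(g-u^-)^2$, holds because $\F\cdot\n$ deviates from the average flux by at most $\alpha_\xi|s-\cdot|$. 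Combining this with Young's inequality, $C\alpha_\xi|g||g-u^-|\le \tfrac{(C_\xi-1)\alpha_\xi}{2}(g-u^-)^2+C'\alpha_\xi g^2/(C_\xi-1)$, leaves only $-C\|g\|_{\bc^-}^2$, with $\alpha_\xi$ bounded on the invariant interval. This is where $C_\xi>1$ is essential. The step I expect to require the most care is making this quadratic lower bound precise for nonlinear $\F$ and verifying that the resulting constant depends only on $\max|\F'|$ and $C_\xi$, not on $u_h$ or the cut geometry.

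Finally, integrate the energy identity over $[0,T]$ to conclude.
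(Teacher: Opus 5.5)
Your proposal is correct and follows essentially the same route as the paper: the energy identity with $v_h=u_h$, elementwise integration by parts via the primitive $\G$, and monotonicity of the flux on interior facets. It also matches the paper on the boundary: the sign of $\Eflux\cdot\n$ on outflow facets, and on inflow facets a split into a term bounded below by $\frac{(C_\xi-1)\alpha_\xi}{2}(u^- - g)^2$ plus a data term absorbed by Young's inequality. There is one harmless slip: in your inflow decomposition the second bracket must be $g\,\widehat{F}(u^-,g;\n_\xi)-\G(g)\cdot\n_\xi$, and as written the two brackets do not sum to the boundary integrand. Since you only estimate that bracket in absolute value, the bound goes through unchanged.
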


\begin{proof}
Choosing $v_h=u_h$ in \eqref{eq:def:scheme} yields
\begin{align}
\frac{1}{2}\frac{d}{dt}\int_\Omega u_h^2 d\x+\gm \stab_{h,1}(u_{h,t},u_h)+a_h(u_h,u_h)+\ga \stab_{h,0}(u_h,u_h)=0.
\end{align}
For the quadratic entropy $U(u)=u^2/2$, let $\G$ be a primitive of
the physical flux $\F$, i.e., $\G'(u) = \F(u)$. Hence, for any $w\in\V$, 
$
\nabla\cdot\G(w)
=
\G'(w)\cdot\nabla w
=
\F(w)\cdot\nabla w.
$  
Using the definition of $a_h(\cdot,\cdot)$ and integrating by parts elementwise, we obtain
\begin{align}\label{eq:stability:eq1}
&a_h(w,w)=
-\sum_{K\in\mt_h} (1, \nabla \cdot \G(w))_{K\cap\Omega}
-\sum_{\xi \in \mathcal{F}_h}
\int_\xi \wf_\xi(w)\, [w]_\xi\,ds 
 =\\ 
\sum_{\xi\in\mathcal F_h^e}&
\int_\xi
\Big(
[\G(w)]_\xi\cdot\n_\xi
-\widehat F_\xi(w)[w]_\xi
\Big)\,ds
+\sum_{\xi\in\mathcal F_h^\bc}
\int_\xi
\Big( \widehat F_\xi(w)w^- - \G(w^-)\cdot\n_\xi \Big)\,ds.\notag
\end{align}

We first consider the contribution from the interior interfaces $\xi\in\mathcal F_h^e$. 
By the mean value theorem, there exists 
$\widetilde w \in [\min(w^-,w^+),\max(w^-,w^+)]$, 
such that
$[\G(w)]_\xi\cdot\n_\xi
=
\F(\widetilde w)\cdot\n_\xi\,[w]_\xi.$ 
Since $\widetilde w$ lies between $w^-$ and $w^+$ and the numerical flux satisfies the monotonicity property (F.2), it follows that 
\begin{equation} \label{eq:boundGmF}
[\G(w)]_\xi\cdot\n_\xi
-
\widehat F_\xi(w)[w]_\xi
=
\bigl(
\F(\widetilde w)\cdot\n_\xi
-
\widehat F_\xi(w) 
\bigr)
[w]_\xi 
\ge 0.
\end{equation}

For $\xi$ on the boundary, we consider different cases.  For periodic boundary conditions, boundary interfaces can be treated in the same way as interior interfaces. Consequently 
$
a_h(u_h,u_h)\ge 0,
$
and therefore 
\begin{align}
\frac{1}{2}\frac{d}{dt}\int_\Omega u_h^2 \,d\x+\gm \stab_{h,1}(u_{h,t},u_h)+\ga\stab_{h,0}(u_h,u_h)\leq 0.
\end{align}
Integrating over the time interval $[0,T]$, using the definition \eqref{stable:2D} of the stabilization form together with the positivity of  $\gm$  and $\ga$ yields the stability estimate \eqref{eq:def:stab:period} for the periodic boundary condition.

On an outflow boundary interface $\xi\in\bc^+$, we have $ w^+=w^-.$ 
By consistency (F.1) 
$
\widehat F_\xi(w)
=
\F(w)\cdot\n_\xi.
$
Hence,
$\widehat F_\xi(w)w^-
-
\G(w^-)\cdot\n_\xi
=
\bigl(
w\F(w)-\G(w)
\bigr)\cdot\n_\xi= \Eflux(w)\cdot\n_\xi.
$ 
  Since $\xi\subset\bc^+$, we have
$
\F'(u)\cdot\n_\xi>0,
$ 
so the scalar normal flux $\F(u)\cdot\n_\xi$ is monotone increasing. Using the identity
$
\Eflux(w)\cdot\n_\xi
=
\int_0^w
\bigl(
\F(w)-\F(s)
\bigr)\cdot\n_\xi\,ds,
$ 
it follows that
$
\Eflux(w)\cdot\n_\xi\ge0. 
$
Hence, the outflow boundary contribution is nonnegative.

For an inflow boundary face $\xi\in\mathcal F_h^{\bc}$, the exterior state is
given by the prescribed boundary data  $w^+=g.$ 
Using the identity $ w^-=g+(w^--g)$, 
we write the boundary contribution as
\begin{align}
&\widehat F_\xi(w)w^-
-\G(w^-)\cdot \n_\xi
=
\Big(
\wf\big(w^-, g;\, \n_\xi \big)(w^- - g)
-
(\G(w^-)-\G(g))\cdot \n_\xi
\Big)
\notag\\
&\qquad
+
\Big(
\wf\big(w^-, g;\, \n_\xi \big)g
-\G(g)\cdot\n_\xi
\Big).
\label{eq:boundary-decomposition}
\end{align}
Let 
$
D_\xi(w^-,g)
:=
\widehat F(w^-,g;\n_\xi)(w^- - g)
-
(\G(w^-)-\G(g))\cdot\n_\xi .
$ 
For the generalized Lax--Friedrichs boundary flux \eqref{eq:boundary-gllf-flux}, 
\begin{align}
D_\xi(w^-,g)
&=
\frac{\lambda_\xi}{2}(w^- - g)^2
+
\int_g^{w^-}
\left(
\frac{\F(w^-)+\F(g)}{2}
-\F(s)
\right)\cdot\n_\xi\,ds .
\end{align}
Since
\[
\left|
\int_g^{w^-}
\left(
\frac{\F(w^-)+\F(g)}{2}
-\F(s)
\right)\cdot\n_\xi\,ds
\right|
\le
\frac{\alpha_\xi}{2}(w^- - g)^2,
\]
and since $\lambda_\xi=\diffc \alpha_\xi$ with $\diffc>1$, we obtain
\begin{equation}
D_\xi(w^-,g)\ge \frac{\diffc-1}{2}\alpha_\xi(w^- - g)^2. 
\label{eq:D-nonnegative}
\end{equation}
Using consistency 
$
\widehat F(g,g;\n_\xi)
=
\F(g)\cdot\n_\xi,
$
we obtain
\begin{align}
\widehat F(w^-,g;\n_\xi)g
-\G(g)\cdot\n_\xi
=
\Eflux(g)\cdot\n_\xi
+
g\Bigl(
\widehat F(w^-,g;\n_\xi)
-
\widehat F(g,g;\n_\xi)
\Bigr).
\label{eq:boundary-data-decomposition}
\end{align}
For the second term, using the definition of the flux \eqref{eq:def:LF} have 
\begin{align}
\left| \widehat F(w^-,g;\n_\xi) - \widehat F(g,g;\n_\xi) \right|
\le \frac{\diffc+1}{2}  \alpha_\xi  |w^- - g|.
\end{align}
Hence, by Young's inequality, 
\begin{align}\label{eq:mixed-boundary-term}
\left|
g\Bigl(
\widehat F(w^-,g;\n_\xi)
-
\widehat F(g,g;\n_\xi)
\Bigr)
\right|
&\le
\frac{\diffc-1}{4} \alpha_\xi |w^- - g|^2  
+ \frac{(\diffc+1)^2}{4(\diffc-1)} \alpha_\xi |g|^2. 
\end{align} 
Using
$
\Eflux(g)\cdot\n_\xi
=
\int_0^g
(\F(g)-\F(s))\cdot\n_\xi\,ds,
$
and $ |(\F(g)-\F(s))\cdot\n_\xi| \le \alpha_\xi |g-s|$ gives 
\[
|\Eflux(g)\cdot\n_\xi| =
\left| \int_0^g (\F(g)-\F(s))\cdot\n_\xi\,ds \right| 
\le
\frac12\alpha_\xi |g|^2.
\]
Therefore
\begin{equation}
\widehat F(w^-,g;\n_\xi)g
-\G(g)\cdot\n_\xi
\ge
- \frac12 {\alpha_\xi}  |g|^2
- \frac{\diffc-1}{4} \alpha_\xi |w^- - g|^2  
- \frac{(\diffc+1)^2}{4(\diffc-1)}  \alpha_\xi |g|^2.
\label{eq:boundary-data-estimate}
\end{equation}
Combining \eqref{eq:boundary-decomposition}, \eqref{eq:D-nonnegative}, and \eqref{eq:boundary-data-estimate} yields the following bound 
\begin{align*}
\widehat F_\xi(w)w^-
-\G(w^-)\cdot\n_\xi
\ge
\frac{\diffc-1}{4} \alpha_\xi|w^- - g|^2
- \frac{(\diffc+1)^2}{4(\diffc-1)}  \alpha_\xi |g|^2
-\frac12 \alpha_\xi |g|^2. 
\end{align*}
Summing over inflow facets gives 
\begin{align*}
a_h(w,w)
\ge
\sum_{\xi\subset\Gamma^-}
\frac{\diffc-1}{4}\int_\xi  \alpha_\xi | w^- - g|^2
\,ds
-
\sum_{\xi\subset\Gamma^-}
\int_\xi \Bigl( \frac{(\diffc+1)^2}{4(\diffc-1)}+\frac12\Bigr)  \alpha_\xi  |g|^2 ds. 
\end{align*}

Therefore, inserting the above estimate into the energy identity yields 
\begin{align*} 
\frac12\frac{d}{dt}\|u_h\|_\Omega^2 
&+ \gm \stab_{h,1}((u_h)_t,u_h) + \ga \stab_{h,0}(u_h,u_h) 
 + \sum_{\xi\subset\Gamma^-} \frac{\diffc-1}{4} \int_\xi \alpha_\xi |u_h^- - g|^2\,ds \\ 
 &\le C\sum_{\xi\subset\Gamma^-} \int_\xi |g|^2\,ds . 
 \end{align*} 
Dropping the nonnegative boundary term (since $\diffc>1$) and $\ga \stab_{h,0}(u_h,u_h)$ and integrating in time yields the estimate \eqref{eq:def:stab:inflow}. 

In summary, the semi-discrete CutDG scheme is $L^2$-stable. The stabilized energy is therefore bounded by the corresponding initial energy and, in the presence of inflow boundaries,  by the prescribed boundary data. Moreover, the estimate is independent of the cut configuration and therefore does not deteriorate in the presence of arbitrarily small cut cells.
\end{proof}

\begin{remark}
If the scalar normal flux $\F(u)\cdot\n_\bc$ is monotonically decreasing on the interval $[\min(w^-,g),\max(w^-,g)]$ at every
point of $\Gamma^-$, which is satisfied, for example, for linear advection, one can use the simpler pure upwind boundary flux
\begin{equation}
\widehat{\boldsymbol F}(w^-,g,\n_\xi) = \boldsymbol F(g)
\label{eq:inflow_upwind_flux}
\end{equation}
 on $\Gamma^-$ and the $L^2$ stability can also be proven with this flux. We have repeated all numerical experiments  in Section~\ref{sec:num} using the pure upwind flux on the inflow boundary and observed the same convergence rates and qualitatively identical solutions. 
\end{remark}

\begin{remark}
The nonlinear stability analysis relies on the assumption that the integrals appearing in \eqref{eq:stability:eq1} are evaluated exactly when performing the integration by parts. This assumption is standard in the analysis of DG methods on fitted meshes.
To avoid the assumption of exact integration in the stability analysis, one can use a flux splitting weak formulation alternatively. For example in case of Burgers' equation one can replace the bilinear formulation $a_h(u_h,v_h)$ in \eqref{scheme:2D:ah} by 
\begin{align}
a(u_h,v_h)= &-\frac{2}{3}\sum_{K\in\mt_h} (\F(u_h), \nabla v_h)_{K\cap\Omega}
+\frac{1}{3}\sum_{K\in\mt_h}(u_h\nabla u_h, v_h)_{K\cap\Omega} \notag\\
&-\sum_{\xi \in \mathcal{F}_h}\int_\xi ( \wf_\xi(u_h)-\frac{1}{3}\F(u_h^{-})\cdot \n_{\xi} )\, [v_h]_\xi\,ds.
\label{scheme:2D:ah:split}
\end{align}
In subsection \ref{sec:num:nonlinear}, the results from the scheme \eqref{eq:def:scheme} with \eqref{scheme:2D:ah:split} and MPP flux limiting are shown in Figure \ref{fig:burgers result for smooth with flux splitting} and in Figure \ref{fig:burgersd_result_barth_split}. 
The numerical results show that the proposed method with the flux-splitting formulation \eqref{scheme:2D:ah:split} produces solutions that are nearly indistinguishable from those obtained using the original formulation \eqref{scheme:2D:ah}.
\end{remark}

We assume $u\in H^{r+1}(\Omega)$ is the exact solution of the hyperbolic conservation laws \eqref{eq:model2D} with  boundary condition \eqref{eq:def:boundrycon}. Using the stability estimate established above, the error analysis follows along the same lines as in \cite{fu2024bound}. 
\begin{theorem}\label{thm:scalarp:error}
Let $u(\x,t)$ denote the exact solution of \eqref{eq:model2D}--\eqref{eq:initialcond2D},  where $\F(u) = \be u$ and  $\nabla \cdot \boldsymbol{\beta} = 0$. Assume that  $u,\, u_t \in L^{\infty} \left([0,T]; H^{r+1}(\Omega)\right)$. Let $u_h \in\V$ be the numerical solution obtained using the CutDG scheme \eqref{eq:def:scheme} with parameters $\gm>0,\ga>0$.  Then the following a priori error estimate holds:
\begin{align}\label{eq:error:estimate}
\|u(\cdot,t) - u_h(\cdot,t)\|_{\Omega}^2 \leq C h^{2r}, \qquad t \in [0,T],
\end{align}
where $C$ is a constant independent of the mesh parameter $h$ and of how the interface intersects the mesh.
\end{theorem}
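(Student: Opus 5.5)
We will use the standard energy argument for DG schemes applied to linear advection, adapted to the stabilized inner product $(\cdot,\cdot)_s$. The error is split as $u-u_h=(u-\pi_h u)+(\pi_h u-u_h)=:\eta+\zeta_h$. For $\pi_h$ we take the elementwise $L^2$ projection of a stable extension $Eu\in H^{r+1}(\Omega_{\mt_h})$ of $u$ (Stein extension), computed on each whole element $K\in\mt_h$. Standard approximation estimates on the active mesh then give
\[
\|\eta\|_{\Omega_{\mt_h}}\lesssim h^{r+1}|u|_{r+1,\Omega},\qquad \|\eta\|_{\partial K}\lesssim h^{r+1/2}|u|_{r+1,\Omega},
\]
and likewise for $\eta_t$. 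The trace bound holds on cut faces and on $K\cap\Gamma$ through a trace inequality on the full element $K$, so it is independent of the cut.

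For the ghost penalty terms we need a weak consistency bound. Since $Eu$ is smooth across facets, $[\partial_n^k Eu]_e=0$, which gives
\[
s_{h,m}(\pi_h u,\pi_h u)=s_{h,m}(\eta,\eta)\lesssim h^{2r+1+m}\|u\|_{r+1}^2.
\]
The exact solution satisfies \eqref{eq:def:scheme} without the stabilization terms, with numerical flux equal to the physical flux on interior faces and $u^+=g$ on $\Gamma^-$. We therefore obtain the Galerkin orthogonality relation
\[
(\zeta_{h,t},v_h)_s+a_h(\zeta_h,v_h)+\gamma_0 s_{h,0}(\zeta_h,v_h)=(\eta_t,v_h)_\Omega+a_h(\eta,v_h)+\gamma_1 s_{h,1}(\pi_hu_t,v_h)+\gamma_0 s_{h,0}(\pi_h u,v_h).
\]
Here $a_h$ is linear because the flux is linear, and the inflow data enter linearly, so the difference with zero inflow data remains in $a_h$.

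Next we test with $v_h=\zeta_h$. By the computation in the proof of Theorem~3.1, now with $\G(w)=\be w^2/2$ and $\nabla\cdot\be=0$, we get
\[
a_h(\zeta_h,\zeta_h)\ge\tfrac12\sum_{\xi}\int_\xi \lambda_\xi[\zeta_h]^2\,ds+\text{(outflow/inflow boundary terms)}\ge c\,|\zeta_h|_{\rm up}^2,
\]
where $|\cdot|_{\rm up}$ collects the jump and boundary seminorms.

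The right-hand side is then bounded term by term. The term $(\eta_t,\zeta_h)$ is handled by Cauchy--Schwarz. The stabilization terms use the bound above together with Young's inequality, for instance $\gamma_0 s_{h,0}(\eta,\zeta_h)\le \tfrac12\gamma_0 s_{h,0}(\zeta_h,\zeta_h)+Ch^{2r+1}$, and $s_{h,1}(\pi_h u_t,\zeta_h)\le s_{h,1}(\pi_hu_t,\pi_hu_t)^{1/2}s_{h,1}(\zeta_h,\zeta_h)^{1/2}$, whose second factor is absorbed into $(\zeta_h,\zeta_h)_s$. The term $a_h(\eta,\zeta_h)$ is the main obstacle. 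After elementwise integration by parts it becomes
\[
\sum_K(\be\cdot\nabla\eta,\zeta_h)_{K\cap\Omega}+\text{facet terms in }\{\eta\},[\eta]\text{ times }[\zeta_h].
\]
The facet terms are bounded by $Ch^{r+1/2}|\zeta_h|_{\rm up}$ and absorbed. The volume term would only give $h^r$ from $\|\nabla\eta\|$, which is still enough for the stated $h^{2r}$ rate. Orthogonality of the projection against $\be\cdot\nabla\zeta_h$ fails on cut elements, so we deliberately do not rely on it. This is why only the suboptimal rate $h^{2r}$ (i.e.\ $h^r$ in $L^2$) is claimed.

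The delicate point is to ensure that all inverse and trace inequalities used to bound $\zeta_h$ on cut elements hold with constants independent of the cut. For this we use the macro-element property of the ghost penalty, namely $\|v_h\|^2_{M}\lesssim\|v_h\|^2_{I_M}+s_{h,1}(v_h,v_h)/h$, which is the standard estimate from \cite{LarZah23,larson2021conservative}. Collecting the bounds gives
\[
\tfrac{d}{dt}(\zeta_h,\zeta_h)_s\le C(\zeta_h,\zeta_h)_s+Ch^{2r}.
\]
Since $(\zeta_h(0),\zeta_h(0))_s\lesssim h^{2r+2}$ by \eqref{eq:def:initial} and the stabilization estimates, Grönwall's inequality yields $\|\zeta_h\|_\Omega^2\le Ch^{2r}$. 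Combined with the bound on $\eta$, this gives \eqref{eq:error:estimate}.
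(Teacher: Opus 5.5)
The paper gives no written proof here; it defers to the one-dimensional analysis of \cite{fu2024bound}. Your overall skeleton is the natural one, but the step you call the main obstacle is handled incorrectly. You combine the volume term from the integrated-by-parts form of $a_h(\eta,\zeta_h)$ with the facet structure from the non-integrated form.

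After elementwise integration by parts (with $\nabla\cdot\be=0$), the identity $[\eta\zeta_h]_\xi=\{\eta\}_\xi[\zeta_h]_\xi+[\eta]_\xi\{\zeta_h\}_\xi$ shows that the interior facet contribution is
\begin{equation*}
\int_\xi\Bigl(\be\cdot\n_\xi\,[\eta]_\xi\,\{\zeta_h\}_\xi+\tfrac{\lambda_\xi}{2}\,[\eta]_\xi\,[\zeta_h]_\xi\Bigr)\,ds .
\end{equation*}
The first term involves the \emph{average} of $\zeta_h$, which the upwind seminorm does not control. Facet terms of the form $\{\eta\},[\eta]$ against $[\zeta_h]$ arise only if you do not integrate by parts, but then the volume term is $-(\be\eta,\nabla\zeta_h)_{K\cap\Omega}$.

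Either way you need a trace or inverse estimate on the \emph{full} element, $\|\zeta_h\|_e\lesssim h^{-1/2}\|\zeta_h\|_K$ or $\|\nabla\zeta_h\|_K\lesssim h^{-1}\|\zeta_h\|_K$. This gives a bound $Ch^r\|\zeta_h\|_{\Omega_{\mt_h}}$, and the ghost penalty is then needed to return to quantities controlled on $\Omega$. This cannot be avoided. On a sliver cut, where $K\cap\Omega$ is a strip of width $\epsilon$ along a facet $e$, the best bound in terms of the physical part is $\|\zeta_h\|_e^2\lesssim\epsilon^{-1}\|\zeta_h\|_{K\cap\Omega}^2$. The term is then only bounded by $Ch^{r+1/2}\epsilon^{-1/2}\|\zeta_h\|_{K\cap\Omega}$. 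Tellingly, your chain of bounds never actually invokes the macro-element estimate or $\gm>0$, so it would give a cut-independent bound for the unstabilized scheme as well.

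To close the argument, the macro-element estimate must also be stated with the right scaling:
\begin{equation*}
\|v_h\|_M^2\lesssim\|v_h\|_{I_M}^2+\sum_{e\in\me_M}\sum_{k=0}^{r}h^{2k+1}\|[\partial_n^k v_h]_e\|_e^2 .
\end{equation*}
That is, the right-hand side uses the contribution of $M$ to $s_{h,1}$, without your extra factor $1/h$. With $s_{h,1}/h$ the right-hand side is not bounded by $C(\zeta_h,\zeta_h)_s$, and the Gr\"onwall inequality does not close. With the correct form and $\gm>0$, summing over macro-elements gives $\|\zeta_h\|_{\Omega_{\mt_h}}^2\lesssim(\zeta_h,\zeta_h)_s$. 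The offending term is then at most $Ch^r(\zeta_h,\zeta_h)_s^{1/2}\le Ch^{2r}+(\zeta_h,\zeta_h)_s$, and the rest of your argument yields the stated $h^{2r}$ rate. A minor, harmless point: $(\eta_t,v_h)_\Omega$ and $a_h(\eta,v_h)$ should enter your error equation with a minus sign.
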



\subsection{The fully discretized scheme}\label{sec:fluxlimit}
In this subsections, we first establish a maximum-principle-preserving property for the low-order piecewise constant scheme. We then show that the proposed flux limiting strategy ensures that the macro-element mean values of the high-order scheme also satisfy the same maximum principle.

\subsubsection{Low order scheme}
In this subsection, we consider the low-order scheme obtained from the flux-limiting strategy \eqref{eq:def:limitflux}, setting $\theta_\xi=0$. We show that the resulting macro-element mean values \eqref{eq:scheme:mean:fluxlimiter} satisfy a discrete maximum principle. 

Recall that flux-limiting with piecewise constant polynomial approximation spaces corresponds to \eqref{eq:scheme:meanmacro} with the modified flux defined by \eqref{eq:def:limitflux} with $\theta_\xi=0$, that is 
\begin{align}\label{eq:scheme:mean:fluxlimiter}
\uhml&=\uhmo- \frac{\Delta t}{|\KM|}\sum_{\xi\in\FMh}\int_{\xi} \loF(\bar u_h) ds. 
\end{align}
Here $\KM=M\cap \Omega$ denotes the macro-element intersection, and $|\partial\KM|$ denotes the total length of its
boundary. By construction of the macro-elements, there exists a constant $C_\delta>0$,  depending on the aggregation parameter $\delta$
and the shape regularity of the background mesh, such that 
\begin{equation}
\frac{|\partial\KM|}{|\KM|}
\le C_\delta h^{-1}.
\end{equation}
Since $\delta$ is fixed independently of the mesh size $h$, the constant $C_\delta$ is independent of the cut configuration.

Assume that the piecewise constant approximation satisfies $\uhmo\in[\um,\uM]$ on every macro-element $M$. The following theorem shows that the low-order scheme \eqref{eq:scheme:mean:fluxlimiter} preserves these bounds under a CFL condition that scales as in the fitted case and therefore does not suffer from the severe time-step restriction associated with arbitrarily small cut cells.

\begin{theorem}\label{p0-bound preserving}
Consider the piecewise constant scheme \eqref{eq:scheme:mean:fluxlimiter} for the conservation law \eqref{eq:model2D}. 
Assume that the macro-element mean values, defined by~\eqref{eq:def:meanvalues:M},  satisfy $\uhmo \in [\um, \uM]$ for all macro-elements $M$.  Furthermore, assume that the time step $\Delta t$ satisfies 
\begin{align}\label{eq:limitscheme:timestep}
	\max\limits_{\KM\in \Omega}\frac{|\partial \KM|} {|\KM|} \lambda \Delta t \leq 1.
\end{align}
Then the updated macro-element mean values  satisfy 
$
\uhml \in [\um, \uM].
$ 
for all macro-elements.
\end{theorem}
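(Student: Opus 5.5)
The plan is the standard convex-combination argument for monotone first-order schemes, carried out on macro-elements. I would first use conservation of the low-order flux. Since the low-order Lax--Friedrichs flux uses the global constant $\lambda$ and acts on the piecewise constant macro-element means, on each $\xi\in\FMh$ it depends only on the two states $\uhmo$ and $\bar u_h^n|_{M_\xi}$. Here $M_\xi$ is the neighbouring macro-element; on $\Gamma$ it is the boundary state, which we assume lies in $[\um,\uM]$ as part of the invariant interval. By consistency (F.1), and because $\KM$ is a closed Lipschitz domain,
\[
\sum_{\xi\in\FMh}\int_\xi \F(\uhmo)\cdot\n_\xi\,ds=0 .
\]
Here $\n_\xi$ is oriented outward from $\KM$; I would reorient the normal via (F.4) where necessary. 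Subtracting this zero term from the update \eqref{eq:scheme:mean:fluxlimiter} gives
\[
\uhml=\uhmo-\frac{\Delta t}{|\KM|}\sum_{\xi\in\FMh}\int_\xi\Big(\wf(\uhmo,\bar u_\xi;\n_\xi)-\wf(\uhmo,\uhmo;\n_\xi)\Big)ds ,
\]
where $\bar u_\xi$ denotes the exterior state on $\xi$.

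Next I would rewrite each difference as an incremental coefficient times a state difference. For the Lax--Friedrichs flux,
\[
\wf(a,b;\n)-\wf(a,a;\n)=\tfrac12(\F(b)-\F(a))\cdot\n-\tfrac{\lambda}{2}(b-a)=-c_\xi\,(b-a),
\qquad c_\xi=\tfrac12\Big(\lambda-\tfrac{(\F(b)-\F(a))\cdot\n}{b-a}\Big).
\]
Because $\lambda$ dominates $|\F'\cdot\n|$ on the invariant interval, we get $0\le c_\xi\le\lambda$. This is the monotonicity property (F.2) made quantitative.

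Substituting, the update becomes
\[
\uhml=\Big(1-\frac{\Delta t}{|\KM|}\sum_{\xi}\int_\xi c_\xi\,ds\Big)\uhmo+\frac{\Delta t}{|\KM|}\sum_\xi\int_\xi c_\xi\,\bar u_\xi\,ds .
\]
All weights are nonnegative and sum to one. The first weight is nonnegative because
\[
\frac{\Delta t}{|\KM|}\sum_\xi\int_\xi c_\xi\,ds\le \frac{|\partial\KM|}{|\KM|}\lambda\Delta t\le1
\]
by the CFL condition \eqref{eq:limitscheme:timestep}. Hence $\uhml$ is a convex combination of $\uhmo$ and the neighbouring means or boundary data, all of which lie in $[\um,\uM]$, so $\uhml\in[\um,\uM]$.

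The main obstacle is bookkeeping rather than analysis. First, one must check that only macro-element interfaces contribute, because interior facets of $M$ cancel when $v_h=1$ on $M$ (ghost-penalty terms vanish for constants). Second, since $\F$ is merely $C^1$, the bound $c_\xi\le\lambda$ requires the global $\alpha$ to be taken over $[\um,\uM]$. Third, on inflow facets with $C_\xi>1$ the coefficient satisfies $c_\xi\le\frac{C_\xi+1}{2}\alpha$, so $\lambda$ in the CFL condition must be understood as the largest diffusion coefficient used. On outflow facets $\bar u_\xi=\uhmo$, so those contributions vanish.
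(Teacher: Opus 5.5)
Your proof is correct, but it is organized differently from the paper's.

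The paper follows the Zhang--Xia--Shu template. It writes the update as a function $H(\bar u_0,\bar u_1,\dots,\bar u_L;I_M)$ of the macro-element mean and the exterior states. It then shows $\partial H/\partial\bar u_i\ge 0$ in every argument under the CFL condition, bounding $\mathbf{F}'\cdot\mathbf{n}_\xi+\lambda\le 2\lambda$ facet by facet. Finally it squeezes the update between $H(u^{\mathtt{min}},\dots,u^{\mathtt{min}})=u^{\mathtt{min}}$ and $H(u^{\mathtt{max}},\dots,u^{\mathtt{max}})=u^{\mathtt{max}}$.

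You instead subtract the vanishing term $\sum_\xi\int_\xi\mathbf{F}(\bar u_M)\cdot\mathbf{n}_\xi\,ds$ and put the update in incremental form, as an explicit convex combination with weights $c_\xi\in[0,\lambda]$. Both arguments use the same ingredients: monotonicity of the Lax--Friedrichs flux, $\sum_\xi|\xi|=|\partial I_M|$, and the divergence theorem on the closed boundary of $I_M$.

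Your version has some advantages:
\begin{itemize}
\item The divergence-theorem identity is made explicit. The paper uses it silently when it asserts $H(u^{\mathtt{min}},\dots,u^{\mathtt{min}})=u^{\mathtt{min}}$ ``by consistency''.
\item No differentiability of $\mathbf{F}$ is needed. You only need $\lambda$ to dominate the difference quotients between the two states that actually meet on each facet. So your own requirement that the global $\alpha$ be taken over $[u^{\mathtt{min}},u^{\mathtt{max}}]$ is sufficient but stronger than necessary. By contrast, the paper's monotonicity-along-paths argument implicitly needs $|\mathbf{F}'\cdot\mathbf{n}_\xi|\le\lambda$ on the whole interval.
\item You treat the boundary facets explicitly. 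Outflow contributions drop out, and you note that on inflow facets with $C_\xi>1$ the $\lambda$ in the CFL condition must dominate the larger diffusion coefficient. The paper's single-$\lambda$ computation passes over this point.
\end{itemize}

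The paper's formulation, for its part, is the familiar template from fitted DG analysis and avoids any explicit bookkeeping of weights.
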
	
\begin{proof}
Let $L$ denote the total number of facets in $\FMh$. These include both facets shared with neighbouring macro-elements and boundary facets. 
Following the standard DG analysis in \cite{Zhang2011MaximumPrincipleSatisfyingAP}, we define
\begin{align}\label{eq:bp:proof:H}
H&(\bar{u}_0, \bar{u}_1, \cdots, \bar{u}_L; \KM) = \bar{u}_0- \frac{\Delta t}{|\KM|}\sum_{\xi\in\FMh}\int_{\xi} \wf_{\xi}\left( \bar{u}_0, \bar{u}_{i};\n_{\xi} \right)\,ds\\
&= \bar{u}_0-\frac{\Delta t}{2|\KM|}\sum_{\xi\in\FMh}\int_{\xi}\left(\F( \bar{u}_0) \cdot \n_{\xi}+\lambda \bar{u}_0\right) 
-\frac{\Delta t}{2|\KM|}\sum_{\xi\in\FMh}\int_{\xi}\left(\F( \bar{u}_i) \cdot \n_{\xi}-\lambda \bar{u}_i\right), \notag
\end{align}
where $\bar u_i$ denotes the value associated with the facet $\xi_i\in\FMh$.

Since
$
\left|
\F'(\bar u_i)\cdot\n_\xi
\right|
\le \lambda, i=0,\ldots,L,
$ 
and
$
\sum_{\xi\in\FMh} |\xi|
=
|\partial\KM|,
$ 
the time-step restriction \eqref{eq:limitscheme:timestep} implies
\begin{align}
\frac{\partial H}{\partial \bar{u}_0}&=1-\frac{\Delta t}{2|\KM|}\sum_{\xi\in\FMh}\int_{\xi}
\left( \F'(\bar u_0)\cdot\n_\xi +\lambda \right) \geq 0,\\
\frac{\partial H}{\partial \bar{u}_i}&=-\frac{\Delta t}{2|\KM|} 
\int_{\xi_i} \left( \F'(\bar u_i) \cdot \n_{\xi} -\lambda\right)\geq 0, \quad i=1,...,L,
\end{align}
where $\xi_i\in \FMh$ is the facet associated with $u_i$. Hence, $H$ is nondecreasing in each of its arguments.

For facets $\xi_i \in \FMh$ shared by macro-elements $M$ and $M_i$ we let  $\uhin$ denote the mean value on the macro-element $M_i$.  
For boundary facets $\xi\subset\Gamma$, the value $\uhin$ is replaced by the boundary state prescribed by the boundary condition. With the definition of $H$ in \eqref{eq:bp:proof:H}, we can rewrite the scheme \eqref{eq:scheme:mean:fluxlimiter}  as 
\begin{equation}\label{eq:def:meanvalues:M:2}
\uhml=H(\uhmo, \uhfn, \cdots, \uhln;\KM).
\end{equation}

Since by assumption $\uhmo,\uhin$, for $ i= 1,\cdots,L$ are all in the interval $[\um, \uM]$ and $H$ is nondecreasing in all of its arguments, we obtain
\[
H(\um,\ldots,\um;\KM)
\le
H(\uhmo,\uhfn,\ldots,\uhln;\KM)
\le
H(\uM,\ldots,\uM;\KM).
\]
By consistency of the numerical flux,
\[
H(\um,\ldots,\um;\KM)=\um,
\qquad
H(\uM,\ldots,\uM;\KM)=\uM.
\]
Therefore,
$
\um\le\uhml\le\uM.
$ 
Hence, the low-order scheme preserves the maximum principle for the macro-element mean values.
\end{proof}

\subsubsection{High order scheme}
We now show that, after applying the proposed flux limiter, the high-order scheme satisfies the maximum principle for the macro-element mean values. 

\begin{theorem}
Suppose the assumptions of Theorem~\ref{p0-bound preserving} are satisfied.  
Let the limiter parameters $\theta_\xi$ be chosen according to \eqref{eq:def:thetai}. 
Then the updated macro-element mean values produced by \eqref{eq:scheme:meanmacro} with the limited flux \eqref{eq:def:limitflux} satisfy
\begin{align}
\uhmn\in[\um,\uM].
\end{align}
\end{theorem}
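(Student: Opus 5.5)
The argument follows the standard parametrized MPP flux-limiter analysis of \cite{xiong2015high}, adapted to macro-elements. The idea is to write the limited update as the low-order update plus a correction, and to show that the correction can never push the mean below $\um$ or above $\uM$. Fix a macro-element $M$.

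\textbf{Step 1: decompose the update.} Since $\liF = \loF + \theta_\xi(\wf^\mathcal{H}_\xi - \loF)$, inserting this into \eqref{eq:scheme:meanmacro} and using the definitions \eqref{eq:def:Bm}--\eqref{eq:def:BM} together with $\beta_\xi$ gives
\begin{align*}
\uhmn-\um &= \Bm - \frac{\Delta t}{|\KM|}\sum_{\xi\in\FMh}\theta_\xi\beta_\xi,\\
\uM-\uhmn &= \BM + \frac{\Delta t}{|\KM|}\sum_{\xi\in\FMh}\theta_\xi\beta_\xi.
\end{align*}
By Theorem~\ref{p0-bound preserving}, applied under the stated CFL condition with all neighbouring means and boundary states in $[\um,\uM]$, we have $\Bm\ge 0$ and $\BM\ge 0$.

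\textbf{Step 2: lower bound.} Facets with $\beta_\xi\le 0$ only increase the first expression, so they can be dropped. For facets with $\beta_\xi>0$, the definitions \eqref{eq:def:thetai}, \eqref{eq:def:LambdaM} and \eqref{eq:def:lambdai} give
\[
\theta_\xi \le \theta_\xi^M \le \Lambda^\mm_{0,\xi} \le \frac{\Bm}{\Cm+\epsilon}.
\]
Summing over these facets then yields
\[
\frac{\Delta t}{|\KM|}\sum_{\beta_\xi>0}\theta_\xi\beta_\xi \le \frac{\Bm}{\Cm+\epsilon}\,\Cm \le \Bm,
\]
so $\uhmn\ge\um$.

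\textbf{Step 3: upper bound.} This is symmetric. Only facets with $\beta_\xi<0$ can reduce the second expression, and on these $\theta_\xi\le\Lambda^\MM_{0,\xi}\le \BM/(\CM+\epsilon)$. The same summation argument with $\CM$ in place of $\Cm$ gives $\uhmn\le\uM$.

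\textbf{Main obstacle: consistency of signs across shared facets.} A facet shared by $M$ and a neighbour $M_i$ carries a single flux and a single $\theta_\xi$, but it enters the two mean-value updates with opposite orientation. Relative to $M_i$, the integral $\beta_\xi$ therefore changes sign, by the conservation property (F.4) applied to both the high- and the low-order flux. The flux limiter is thus constraining the lower bound on one side and the upper bound on the other. This is exactly why \eqref{eq:def:thetai} takes the minimum of the two candidate parameters: the resulting $\theta_\xi$ is bounded by the relevant $\Lambda$ from each side simultaneously, so the estimates of Steps 2 and 3 hold for $M$ and $M_i$ at once.

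Two further points need checking. First, boundary facets belong to a single macro-element, so there $\theta_\xi=\theta_\xi^M$ suffices. Second, the factor $\min\{1,\cdot\}$ in \eqref{eq:def:lambdai} keeps $\theta_\xi\in[0,1]$, which was used implicitly in the decomposition of Step 1. Finally, for SSP-RK the same argument applies verbatim with $\wf^\mathcal{H}_\xi$ taken to be the Runge--Kutta flux \eqref{eq:def:highflux:RK}, since only the value of $\beta_\xi$ changes.
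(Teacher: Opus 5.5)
Your proof is correct and follows essentially the same route as the paper. Both write the limited mean as the low-order update, whose margins $\Bm,\BM\ge 0$ come from Theorem~\ref{p0-bound preserving}, minus a $\theta$-weighted correction controlled by $\Lambda^{\mm}_{0,\xi}\Cm\le\Bm$ and $\Lambda^{\MM}_{0,\xi}\CM\le\BM$, and both use the minimum in \eqref{eq:def:thetai} to secure admissibility for both macro-elements sharing a facet. Your explicit step of discarding the facets where $\beta_\xi$ has the harmless sign is a cleaner version of the paper's argument, which instead pulls $\Lambda^{\mm}_{0,\xi}$ out of the sum and then asserts admissibility for all $\theta_\xi\in[0,\theta_\xi^M]$.
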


\begin{proof}
 From the definitions of $\mathbb B^\mm$, $\mathbb B^\MM$ in
\eqref{eq:def:Bm}--\eqref{eq:def:BM}, 
$\mathbb C^\mm$, $\mathbb C^\MM$  in
\eqref{eq:def:cm}, and  
$\Lambda_{0,\xi}^{\mm}$, $\Lambda_{0,\xi}^{\MM}$
in  \eqref{eq:def:lambdai}, we have 
\begin{align*}
\uhmo&- \frac{\Delta t}{|\KM|} \sum_{\xi\in\FMh}\int_{\xi} \Bigl(\Lambda^\mm_{0,\xi} \hiF(u_h^n) + (1-\Lambda^\mm_{0,\xi}) \loF(\bar{u}_h^n)\Bigr) ds\\
&\geq  \mathbb{B}^\mm + \um - \Lambda^\mm_{0,\xi} \mathbb{C}^\mm\geq \um, \\
\uhmo &- \frac{\Delta t}{|\KM|} \sum_{\xi\in\FMh}\int_{\xi} \Bigl(\Lambda^\MM_{0,\xi} \hiF(u_h^n) + (1-\Lambda^\MM_{0,\xi}) \loF(\bar{u}_h^n)\Bigr) ds\\
&\leq 
\mathbb{B}^\MM + \uM - \Lambda^\MM_{0,\xi} \mathbb{C}^\MM \leq \uM .
\end{align*}
Hence, the admissibility condition \eqref{eq:def:fluxlimit:ieq} holds on the macro-element $M$ for all $\theta_\xi \in [0,\theta_\xi^M]$,  where $\theta_{\xi}^{M} $ is defined by \eqref{eq:def:LambdaM}. 
If a facet is shared by two macro-elements $M$ and $M_i$, the parameter $\theta_\xi$ is chosen according to \eqref{eq:def:thetai}. Consequently, 
$\theta_\xi\le \theta_\xi^M,$ 
$\theta_\xi\le\theta_\xi^{M_i},$
so that the admissibility condition is satisfied for both macro-elements simultaneously. Therefore, \eqref{eq:def:fluxlimit:ieq} holds on every macro-element, and the updated mean values satisfy
\[
\um \le \uhmn \le \uM.
\]
\end{proof}

\newcommand{\jaco}{\alpha}

\section{Numerical examples}\label{sec:num}
In this section, we present numerical examples illustrating the accuracy, stability, and maximum-principle-preserving properties of the proposed schemes.  The physical domain is embedded in a fixed uniformly refined Cartesian background mesh. In all computations, $\delta=0.2$ is used in \eqref{eq:largeel} to identify small cells. For the time discretization,  the third order SSP-RK method \eqref{eq:def:sspRK3} is used for the proposed method with  $P^1$ and $P^2$ approximations, and the fourth order SSP-RK method \cite{spiteri2002new} is used for the $P^3$ approximations. The time step is chosen as $\Delta t = 0.15\frac{h}{\jaco}, 0.08\frac{h}{\jaco}$, and $0.05\frac{h}{\jaco}$ for $P^1,P^2$, and $P^3$ approximations, respectively. Here $\jaco$ denotes the largest absolute value of the Jacobian  ${\partial \F(u)\cdot\n}/{\partial u}$ over all the interfaces $\xi\in \mathcal{F}_h$.  Dirichlet data are imposed on inflow boundaries, while extrapolation is used at outflow boundaries. In the following subsections, the accuracy and MPP property of the proposed method are demonstrated by solving both smooth and discontinuous problems.

\subsection{Linear problems with constant coefficients}
We first consider the linear advection problem with $ \F(u)=\be u$ in \eqref{eq:model2D} with constant $\be$
\begin{align}\label{eq:linear}
    \partial_{t} u+ \nabla \cdot (\be u)=0.
\end{align}

\subsubsection{A two dimensional problem}\label{test: flower}
We take $\be=(2,2)$ and the initial condition as $u_0(x,y)=0.5+\sin(0.5 \pi (x+y))$. 
The physical domain is defined as in \cite{gurkan2020stabilized}, where domain's boundary $\Gamma$ is  implicitly described by the level set function
\begin{equation}
    \Gamma = \{(x,y) \in \mathbb{R}^2 \mid \phi(x,y) = 0 \}, \,
    \phi(x,y) = \sqrt{x^2 + y^2} - r_0 - r_1 \cos\bigl(5\,\text{atan}_2(y,x)\bigr),
\end{equation}
where $r_0 = 0.5$ and $r_1 = 0.15$. We set the physical domain as $\Omega=\{ (x,y) \in \Omega: \phi(x,y) \leq 0\}$. We compute this problem up to $T=0.5$.
A coarse mesh together with our numerical results is shown in Figure~\ref{fig: linear 2D result}.  As expected, when the MPP flux limiting is applied, no overshoots or undershoots of the mean values on macro-elements are observed. The numerical errors and convergence rates shown in the right panel of Figure~\ref{fig: linear 2D result},  indicate optimal convergence rates. 
\begin{figure}[!htbp]
    \begin{center}
        \includegraphics[width=0.235\textwidth]{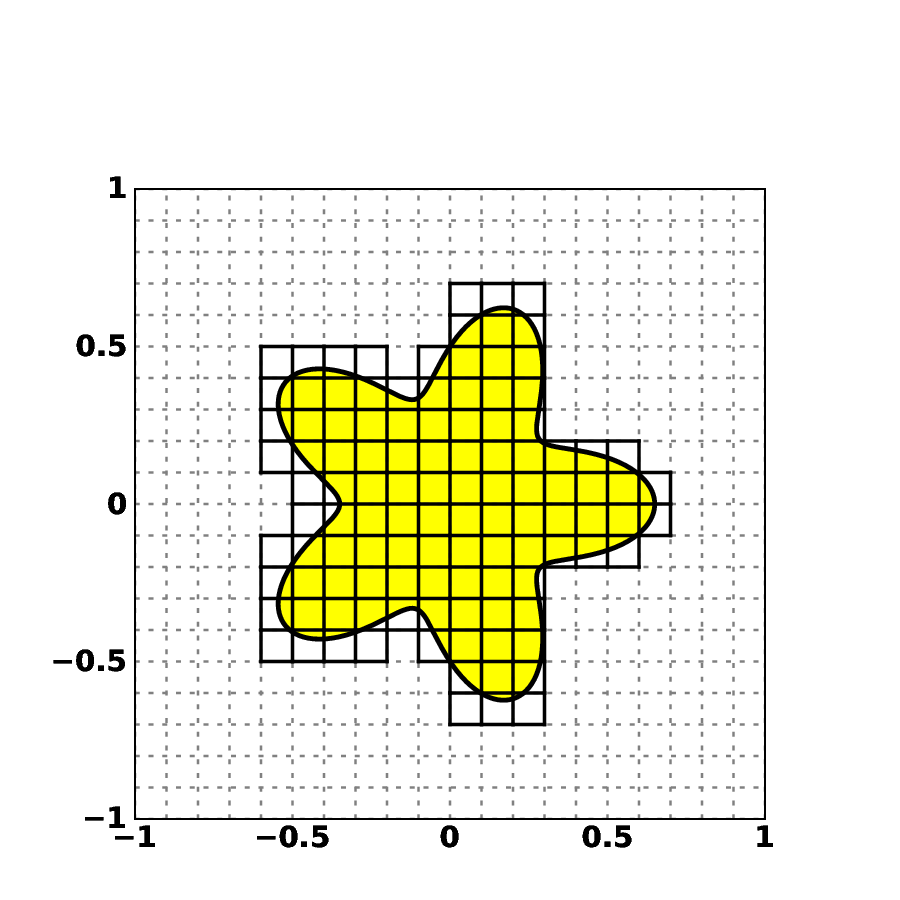}
        \includegraphics[width=0.235\textwidth]{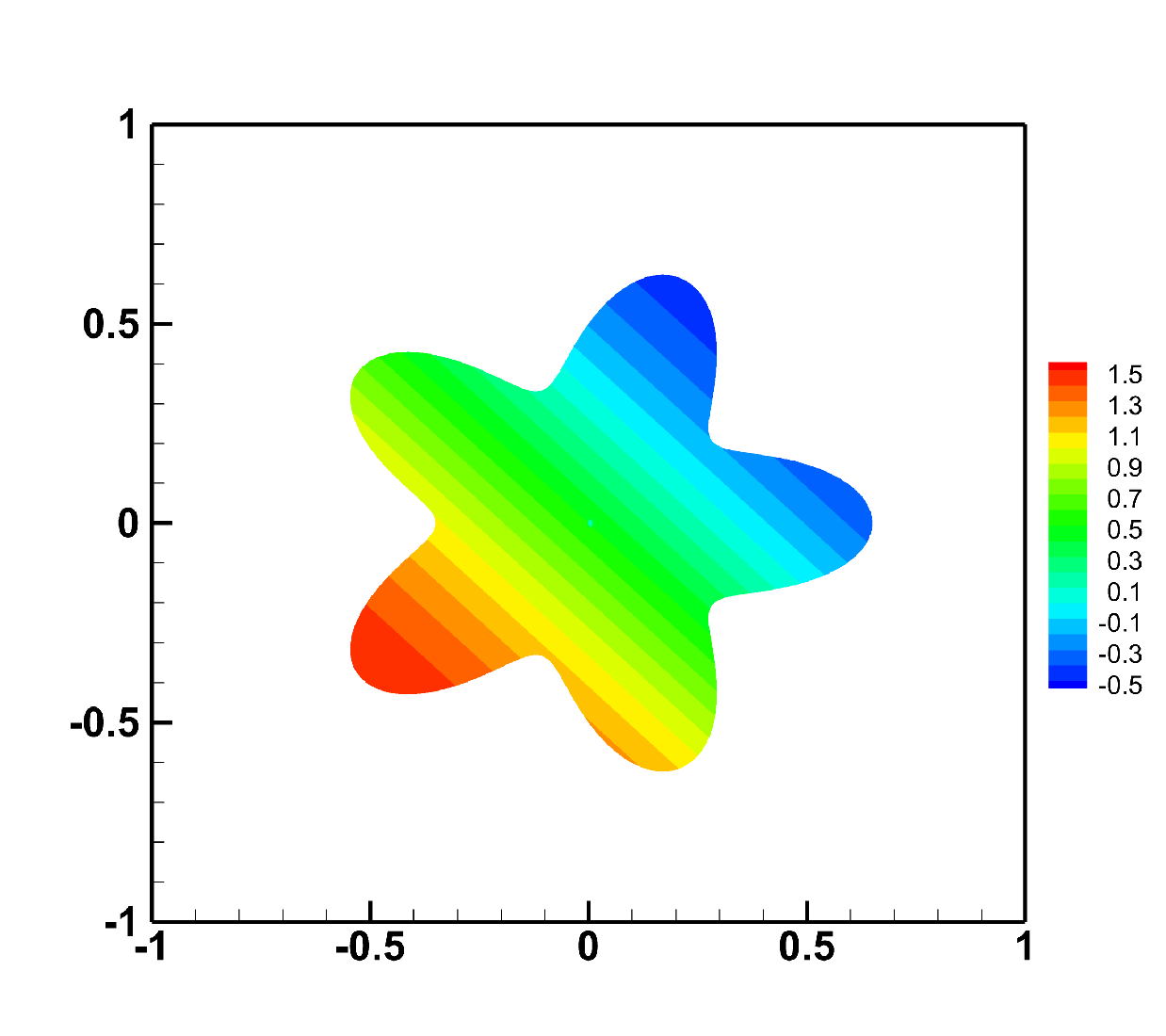}
        \includegraphics[width=0.25\textwidth]{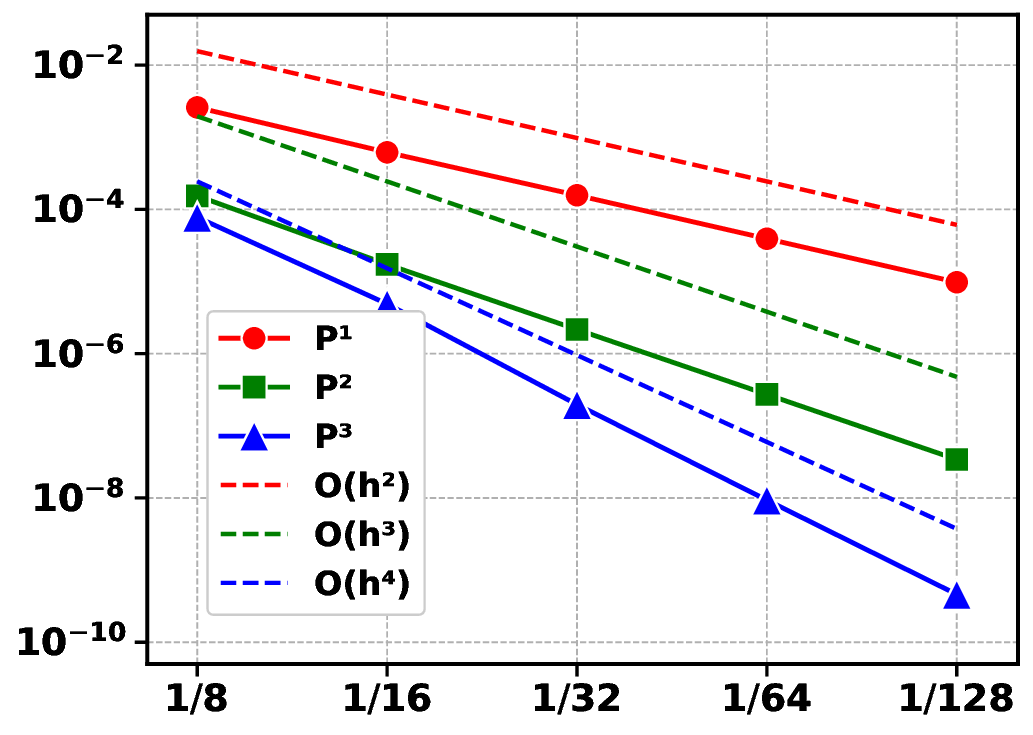}
        \includegraphics[width=0.25\textwidth]{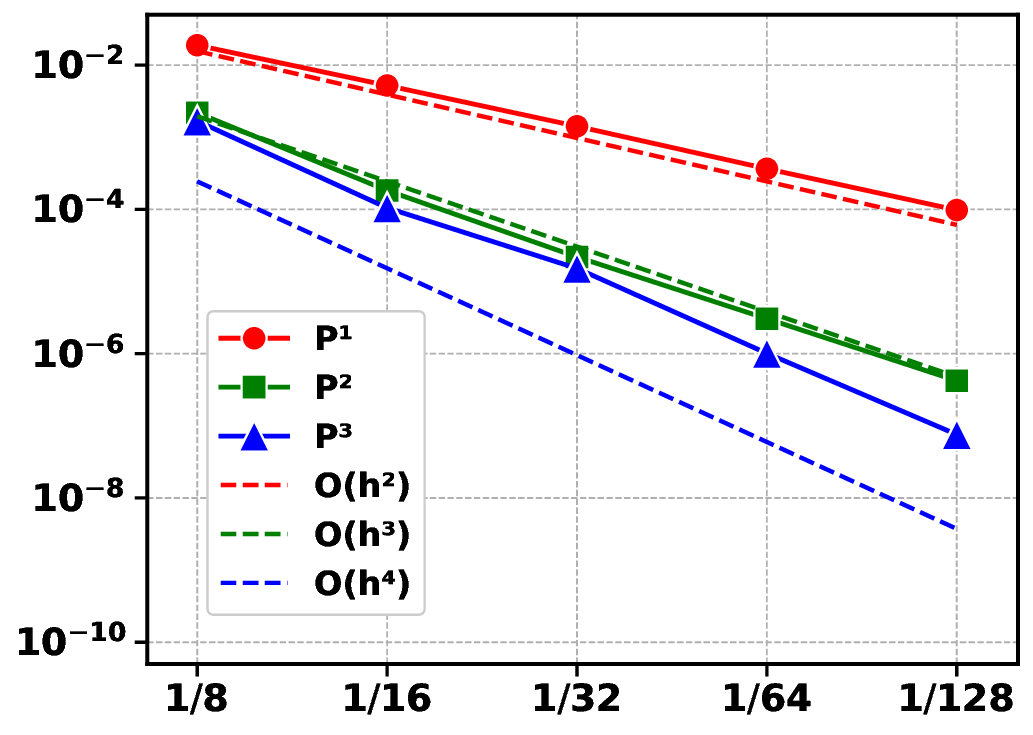}
        \caption{Results at time $T=0.5$ from our proposed  CutDG schemes with different polynomials solving the 2D linear advection equation in Example \ref{test: flower}.  From left to right: one illustration of mesh;  $P^3$ solution with mesh size $h=1/128$;  L$^2$ errors; L$^{\infty}$ errors.}
        \label{fig: linear 2D result}
    \end{center}
\end{figure}

\vspace{-1.5em}
\subsubsection{A three dimensional problem}
We now consider the three-dimensional problem \eqref{eq:linear} with constant coefficients $\be=(2,2,2)$ in $\F(u)=\be u$. The physical domain is a sphere $\Omega=\{ (x,y,z) \in \Omega: x^2+y^2+z^2 \leq c_0\}$, with $c_0=1$. The physical surface is $\Gamma: \{ (x,y,z) \in \Omega: x^2+y^2+z^2=1\}$. We embed the domain $\Omega$ in a cube $[-1.1,1.1]\times[-1.1,1.1]\times[-1.1,1.1]$ partitioned into regular uniform cubes.  
We take the initial condition to be $u_0(x,y,z)=0.5+\sin(0.5 \pi (x+y+z))$ and and use Dirichlet boundary condition such that the solution is
$
    u_b = 0.5+\sin(0.5 \pi (x+y+z - 6t)).
$ 
We compute this problem up to $T=0.5$. In the left Figure~\ref{fig: linear 3D result}, the $P^2$ solution on the mesh with $h=2.2/80$ is shown. Note that there are no obvious oscillations. In the right panel of  Figure~\ref{fig: linear 3D result}, $L^2$ and $L^\infty$ errors are plotted, respectively. 
Optimal convergence is again observed for both $P^1$ and $P^2$ approximations. 
\begin{figure}[!htb]
    \begin{center}
        \includegraphics[width=0.25\textwidth]{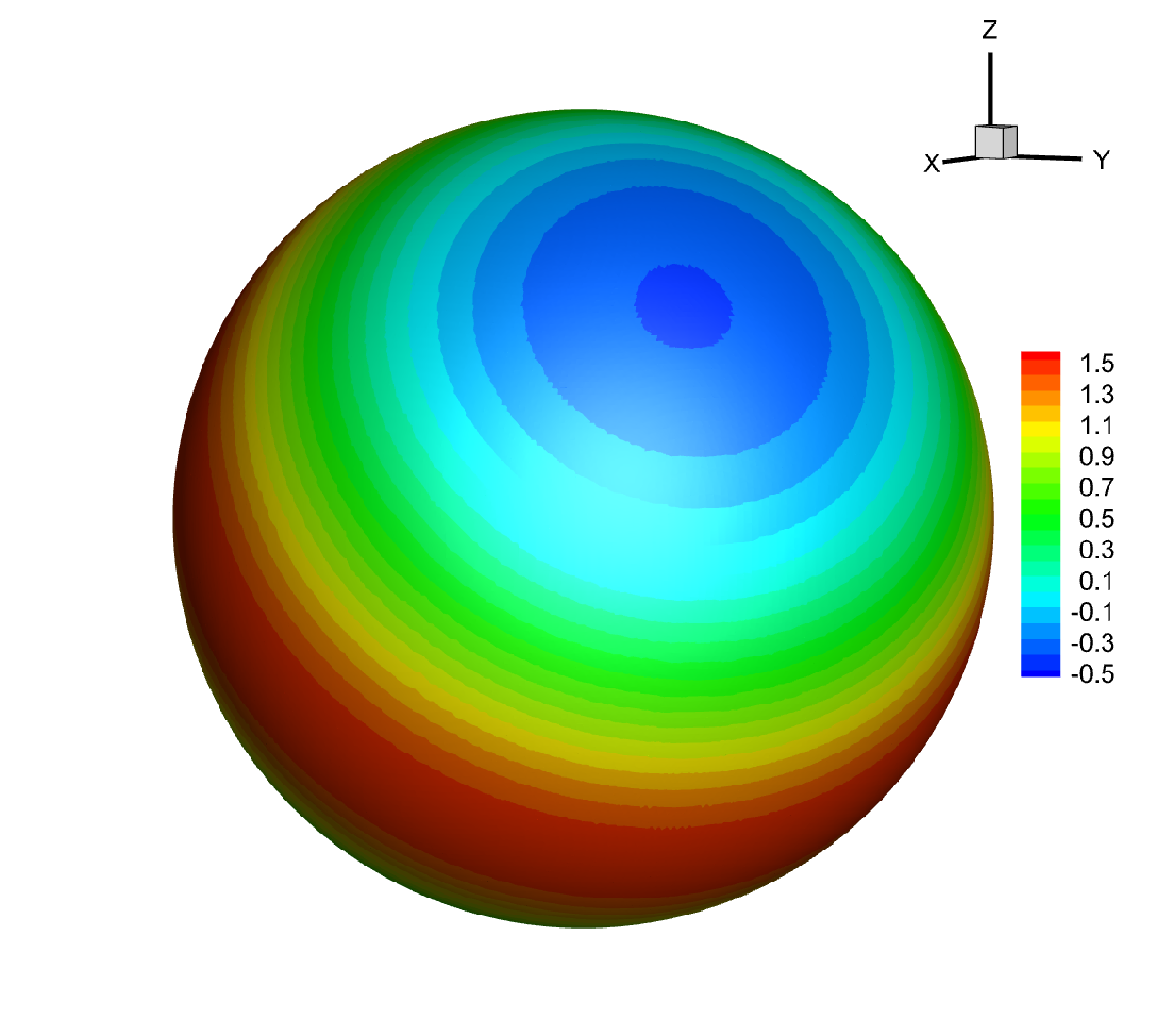} \quad
        \includegraphics[width=0.28\textwidth]{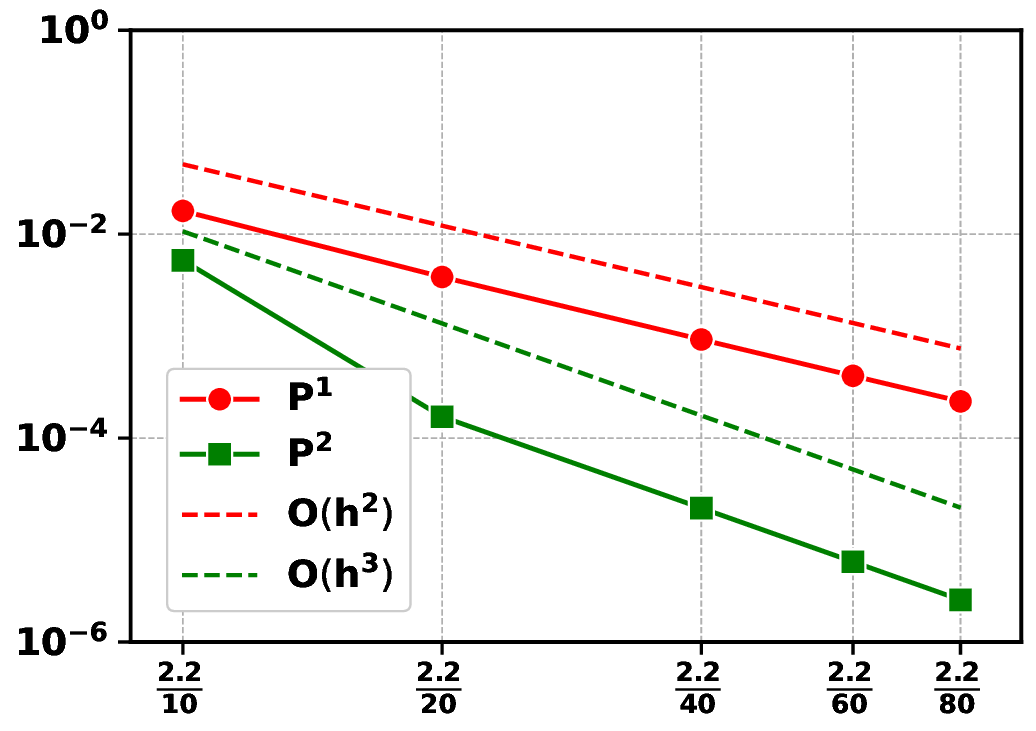} \quad
        \includegraphics[width=0.28\textwidth]{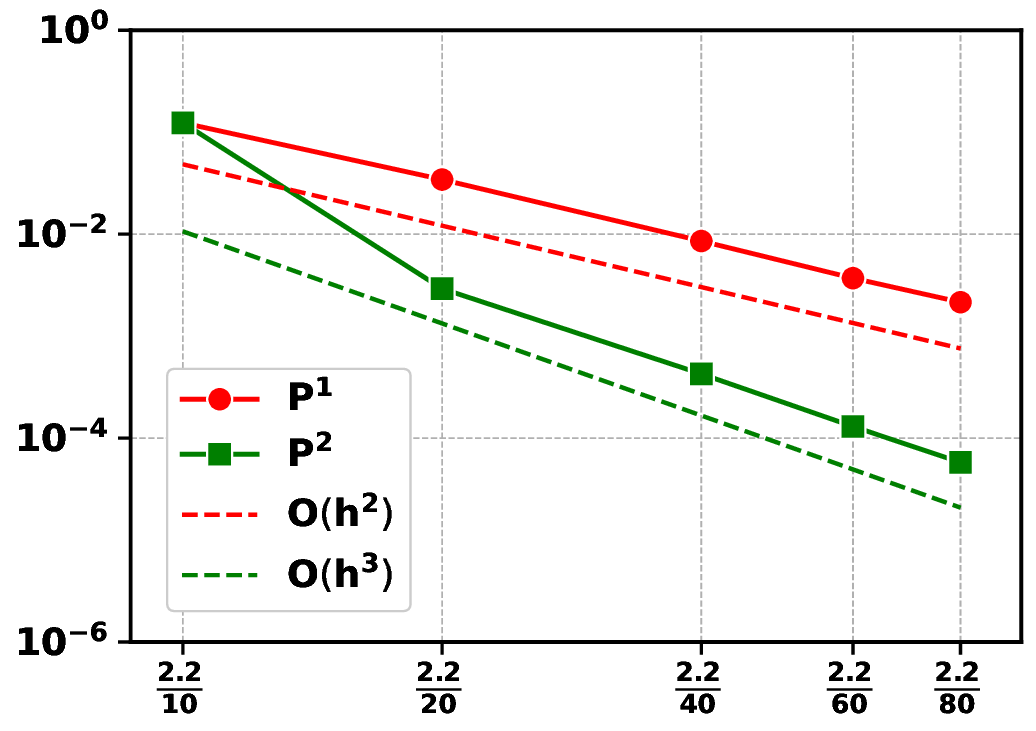}
        \caption{Results at $T=0.5$ from the proposed CutDG schemes solving the 3D linear advection equation.  Left: $P^2$ solutions on the mesh $h=2.2/80$. Middle: $L^2$ errors. Right: L$^{\infty}$ errors.}
        \label{fig: linear 3D result}
    \end{center}
\end{figure}

\subsubsection{A two dimensional non-smooth problem}
We next consider the linear advection problem \eqref{eq:linear} with $\be=(1,1)$ and non-smooth initial data. 
Let $\mt_{B_h}$ be uniform partition of the domain $[0, 2] \times [0, 2]$. Define the physical boundary $\Gamma$ by  the line $y = x - c_0$ with $c_0 $ being a constant. The physical domain is set as $\Omega=\{ (x,y) \in \mt_{B_h}: y \geq x-c_0\}$. In our computation, we set $c_0 = 0.5001$.  The following non-smooth initial data is considered
\begin{align}
    u_0(x,y)= & \left\{ \begin{array}{ll}
        1.5 & \mbox{if $1<x+y<3$,}\\
        -0.5  & \mbox{else}.
    \end{array} \right.    \nonumber
\end{align}
We solve this problem on a uniform background mesh with mesh size $h=1/40$ up to $T=1$. For this problem, the modified BJ limiter from Section \ref{sec:slopelimiter} is used to control oscillations. 
The results with $P^3$ approximation are shown in Figure~\ref{fig:step_result}. The method captures the discontinuities while preserving the maximum principle for macro-element mean values. To quantify maximum-principle preservation, we define 
\begin{align}
e^\MM(t^n)=\uM- \max_{M\in\mcM_h } \uhmo,\quad e^{\mm}(t^n)=\um- \min_{M\in\mcM_h} \uhmo.
\end{align}
 The maximum principle is satisfied whenever \(e^\MM\ge 0\) and \(e^{\mm}\le 0\). Figure~\ref{fig:step_result} shows that \(\uhmo\) remains within  \([\um,\uM]\) up to machine precision. 
\begin{figure}[!htb]
    \begin{center}
        \includegraphics[width=0.25\textwidth]{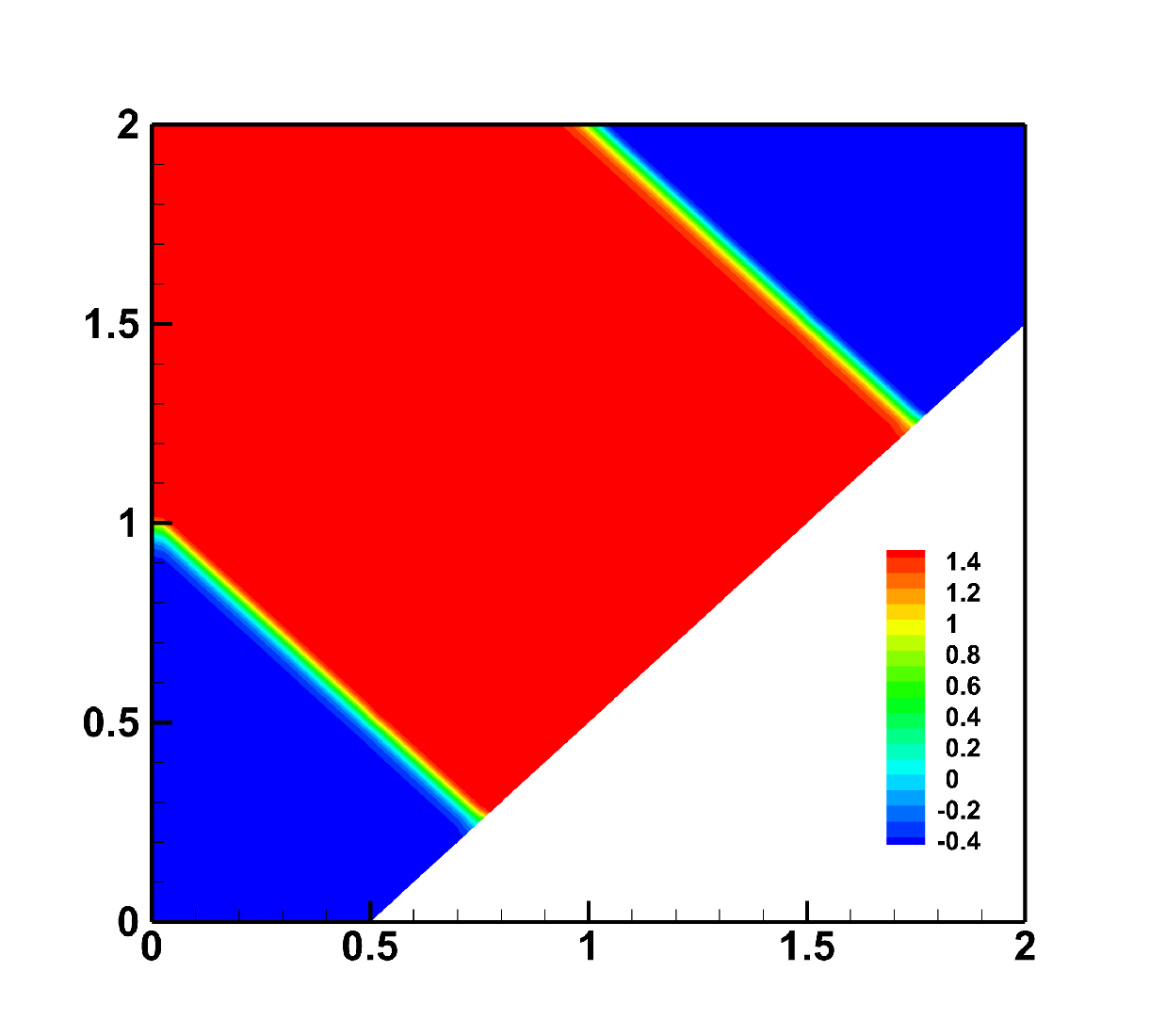} \quad
        \includegraphics[width=0.25\textwidth]{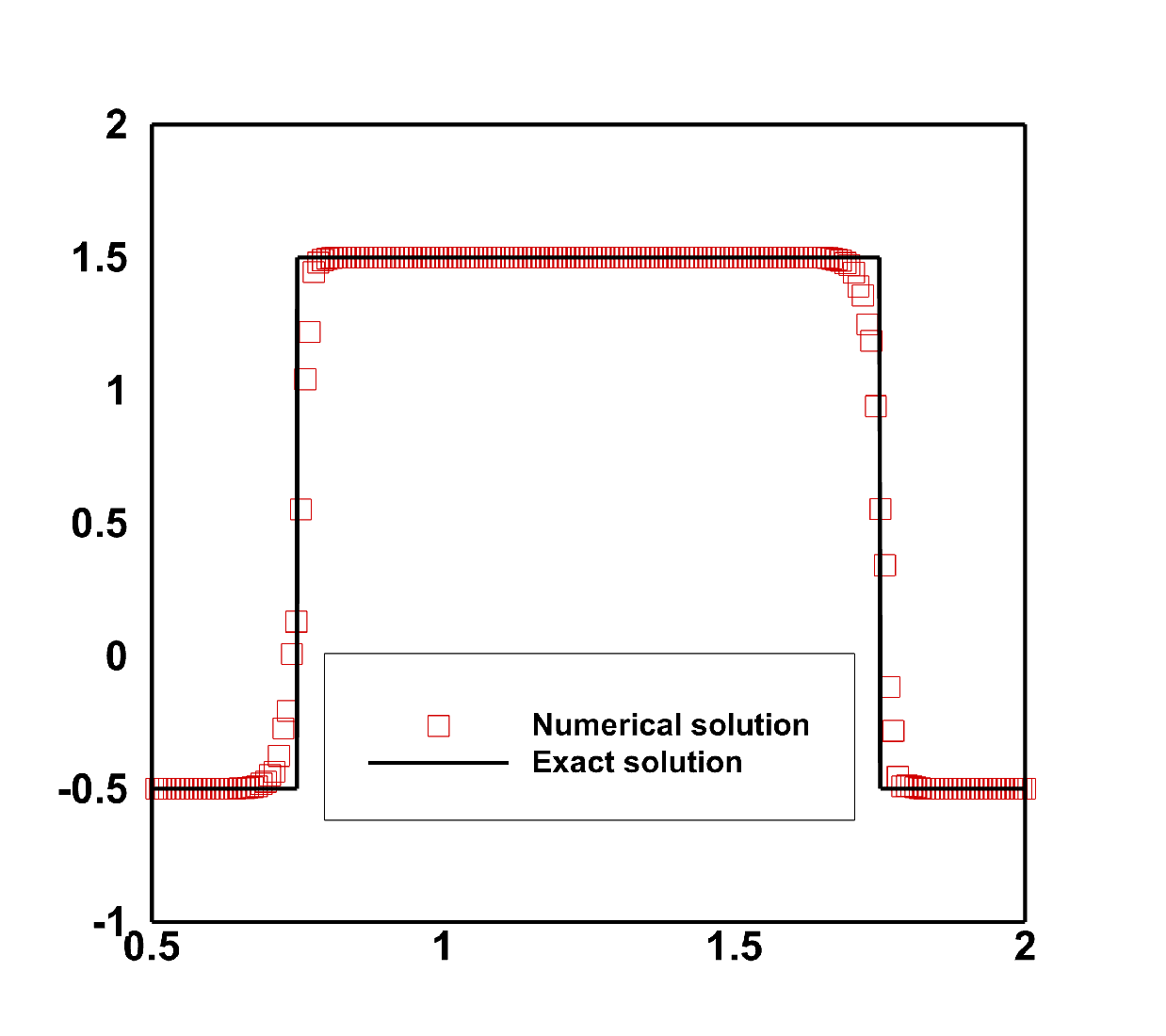} \quad
        \includegraphics[width=0.25\textwidth,height=0.20\textwidth]{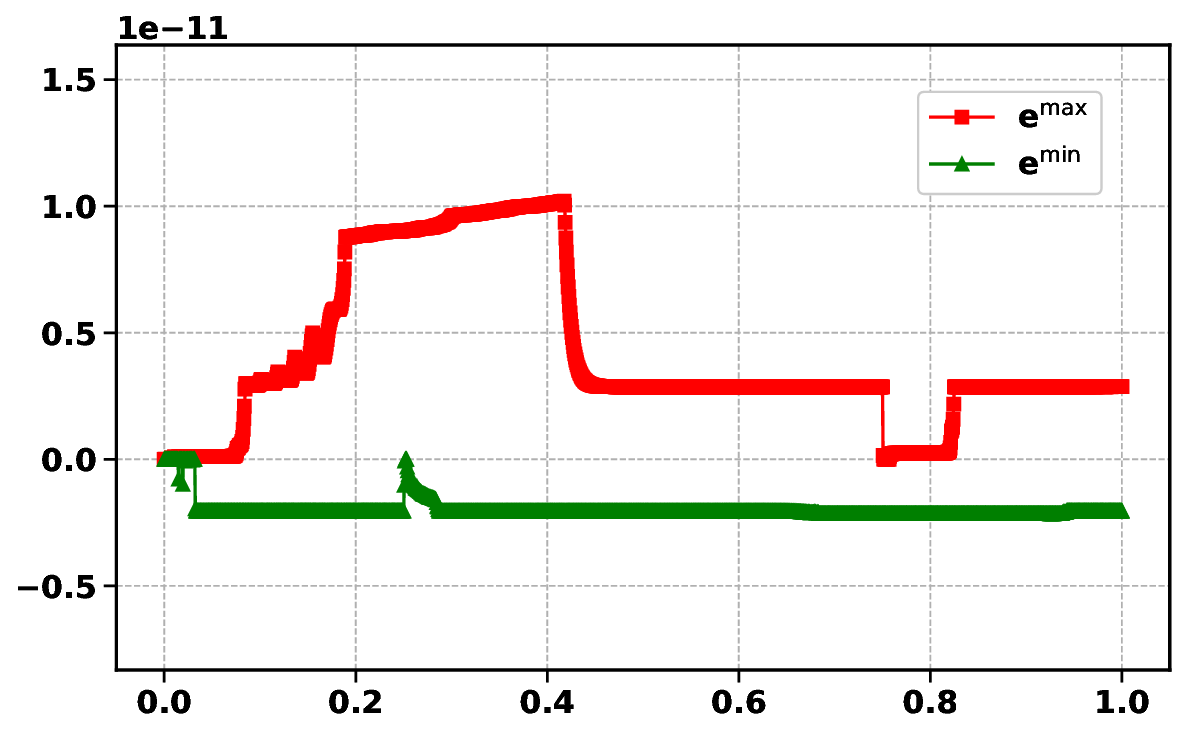} 
        \caption{From left to right: 
            (1) Solution for the linear advection equation with non-smooth initial data, where both the MPP flux limiter and BJ limiter are applied.
            (2) Solutions on~$\Gamma$. 
               (3) MPP property: $e^{\mm}$ (green) and $e^\MM$ (red) versus time~$t$. 
            }
        \label{fig:step_result}
    \end{center}
\end{figure}
\vspace{-1.3em}

For comparison, we repeat the computation without the MPP flux limiter.  
The results are shown in Figure~\ref{fig: step function with barth limiter}.  Although oscillations are largely suppressed, the maximum principle is no longer guaranteed. 
\begin{figure}[!htb]
    \begin{center}
        \includegraphics[width=0.35\textwidth]{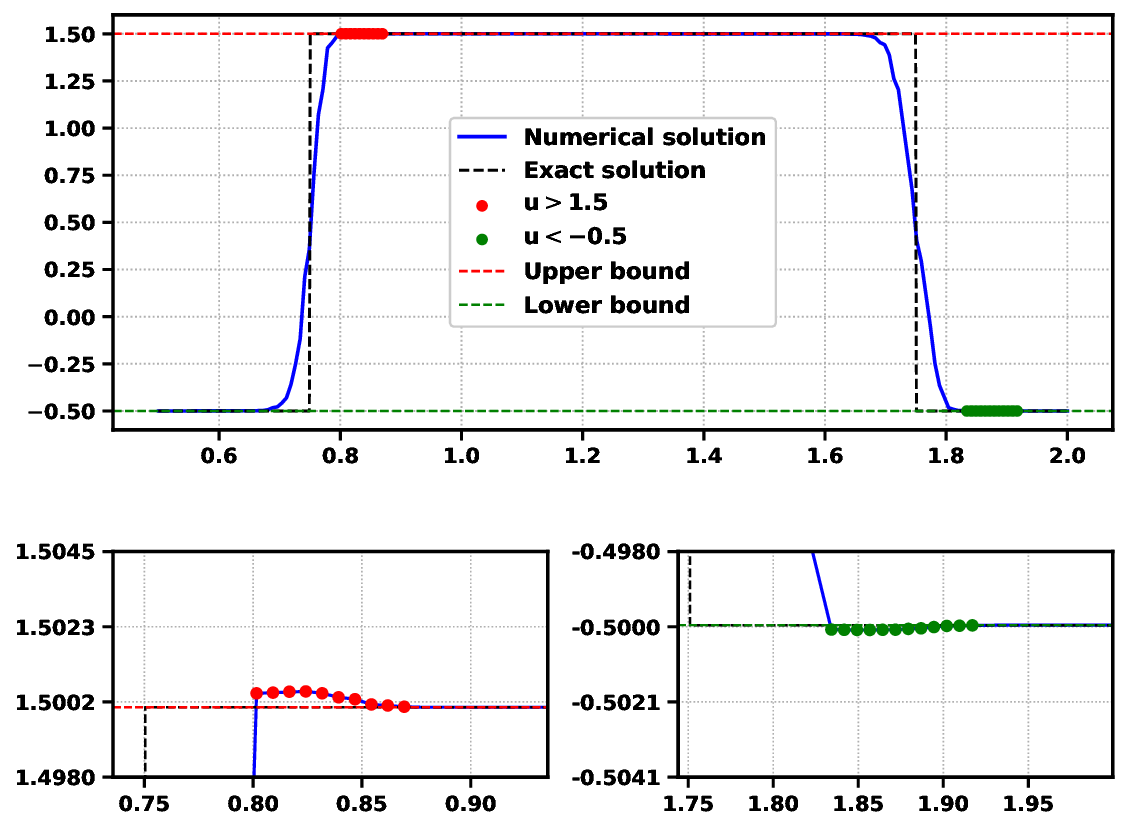}
        \includegraphics[width=0.4\textwidth]{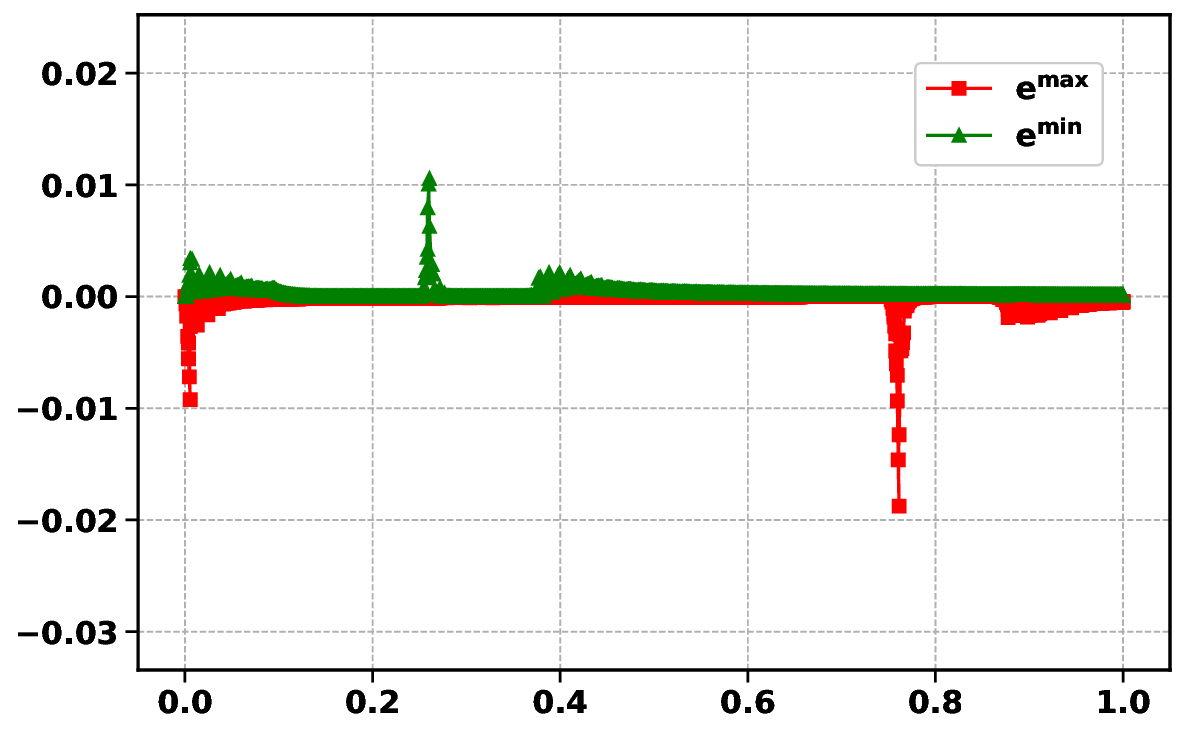}
        \caption{Results from the scheme without MPP flux limiter for the advection equation with non-smooth initial data.  Left: $P^1$ solution along the cut line $\Gamma$ from the scheme \eqref{eq:def:scheme} with only the BJ limiter.
            Right:
            Temporal evolution of $e^{\mm}$ (green) and $e^\MM$ (red).
        }
        \label{fig: step function with barth limiter}
    \end{center}
\end{figure}

\subsection{Nonlinear problems: Burgers' equation}\label{sec:num:nonlinear}
In this subsection, we solve the two-dimensional Burgers' equation
$
	u_{t}+\left(\frac{u^{2}}{2}\right)_{x}+\left(\frac{u^{2}}{2}\right)_{y}=0.
$ 
We consider the problem on a circular domain $\Omega=\{(x,y):x^2+y^2\leq4\}$, which is embedded in the uniform partitioned domain $[-2,2]\times[-2,2]$. 
The initial condition is $u_0(x,y)=0.5+\sin (\pi(x+y))$. In this setting, the shock appears when $t>\frac{1}{\pi}$.

We first solve this problem by  our proposed CutDG method with different polynomials $P^k,k=1,2,3$ until  $T=\frac{0.5}{\pi}$ before shock shows.  The resulting  $L^2$ and $L^\infty$ errors at the final time are shown in Figure  \ref{fig:burgers result for smooth}, which indicates optimal accuracy of our proposed CutDG method also for nonlinear problems.
\begin{figure}[!htb]
	\begin{center}
		\includegraphics[width=0.3\textwidth]{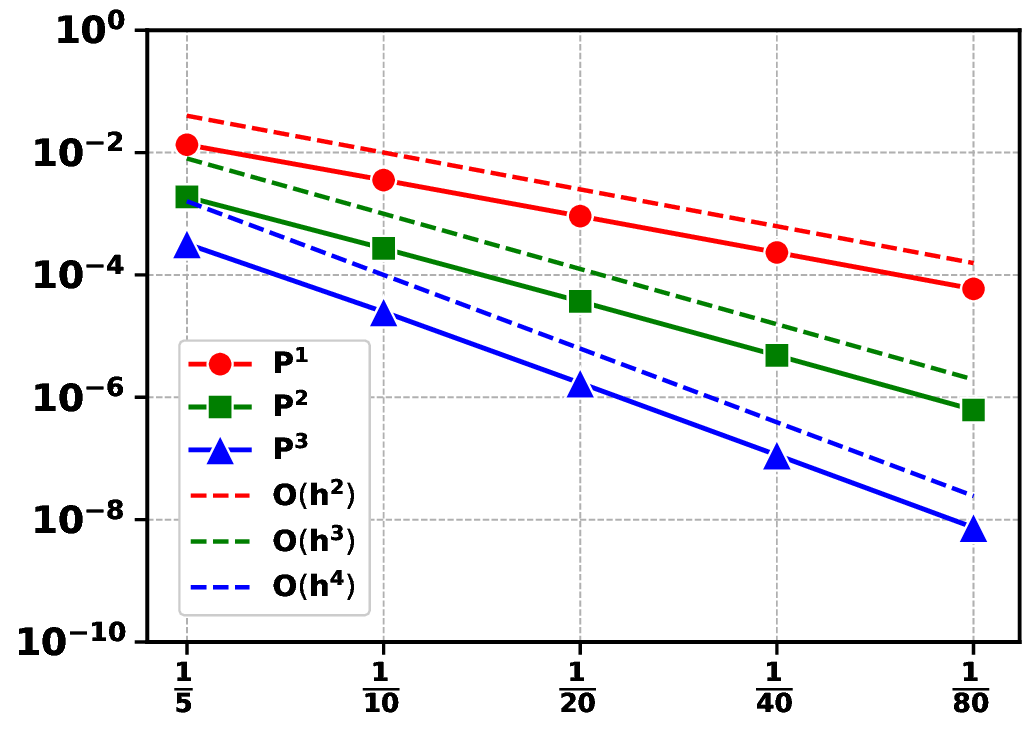} \qquad\qquad
		\includegraphics[width=0.3\textwidth]{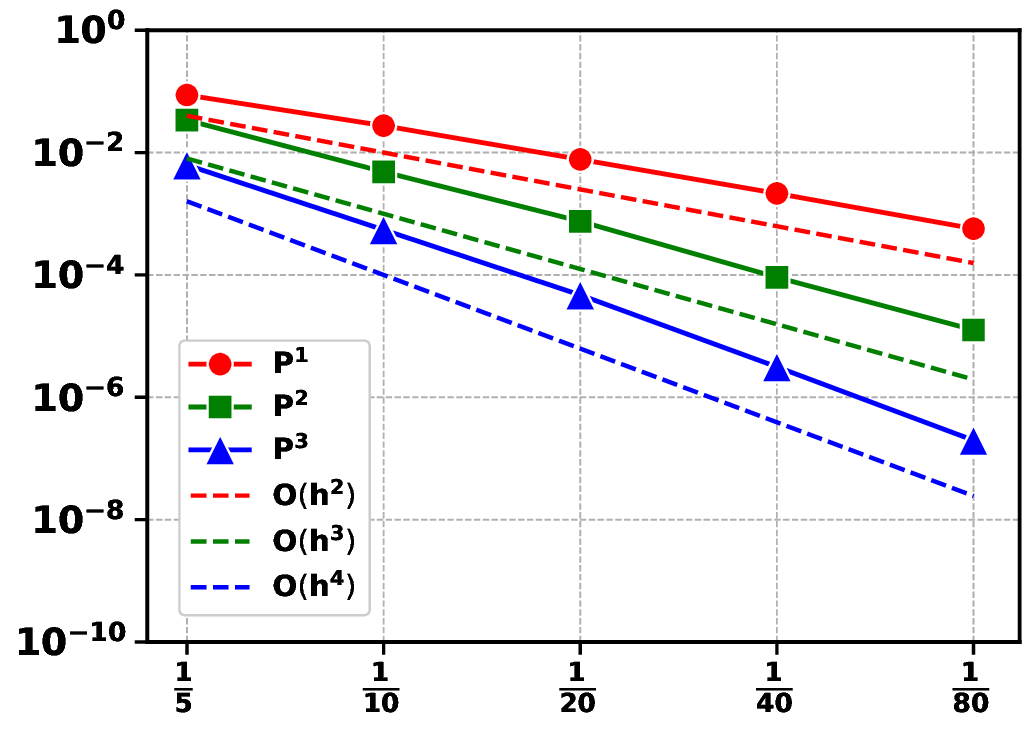}
		\caption{Errors  of the Burgers' equation from our proposed method at $T=0.5/\pi$ with $P^k$, $k=1,2,3$ approximations.  
		Left: L$^2$ errors, Right: L$^{\infty}$ errors.}
		\label{fig:burgers result for smooth}
	\end{center}
\end{figure}

Next we solve this problem on the background mesh with $h=1/20$ and $P^3$ approximation up to time $T=\frac{1.5}{\pi}$ when a shock has formed.  The BJ limiter \eqref{eq:slopelimit:uh} is applied in each stage of RK time step to suppress the oscillations near the shock.  The numerical results are shown in Figure \ref{fig:burgersd_result}, which shows that our proposed method can capture shocks well, while maintaining the maximum principle.
\begin{figure}[!htb]
	\begin{center}
		\includegraphics[width=0.25\textwidth]{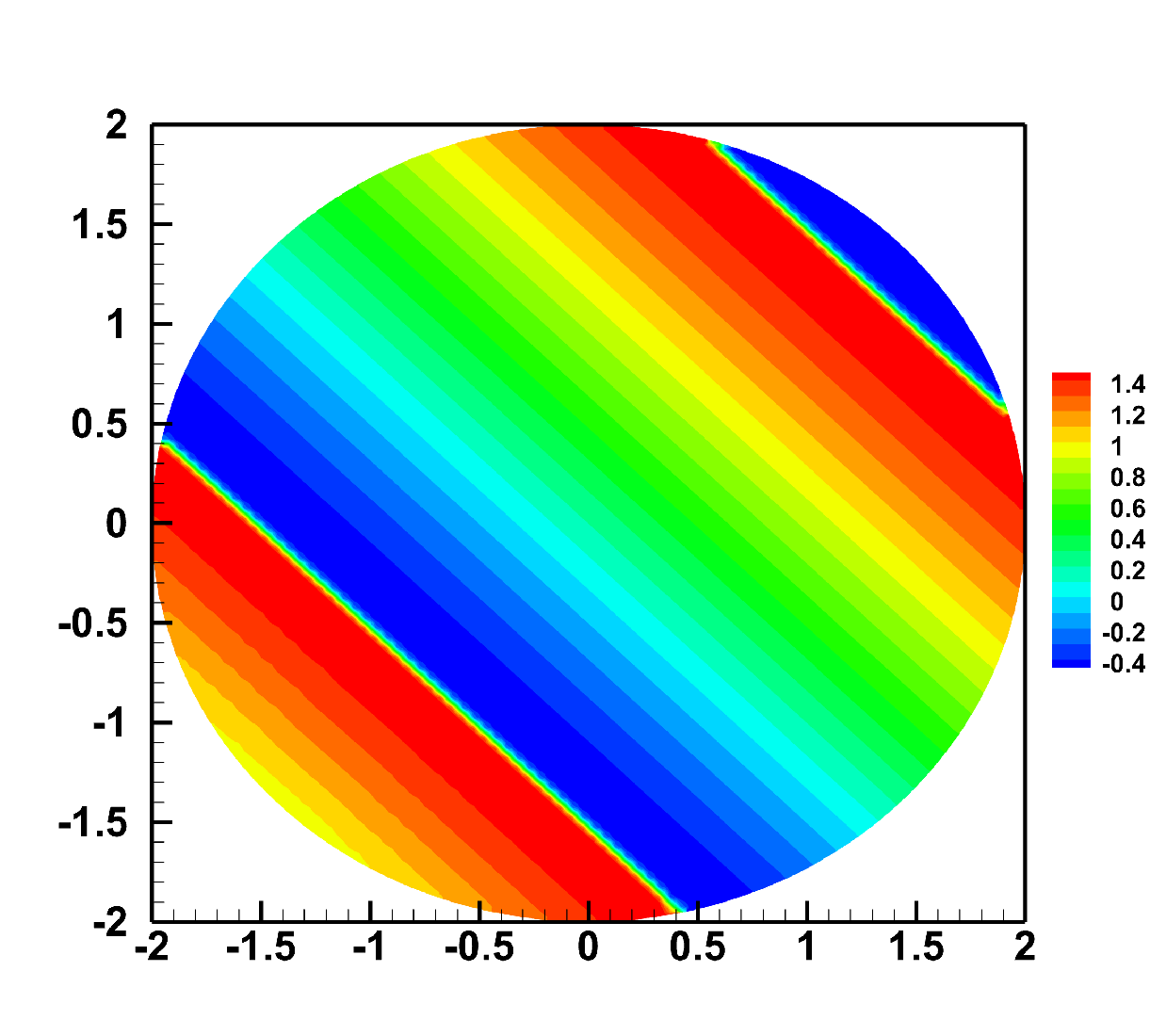} \quad
		\includegraphics[width=0.25\textwidth]{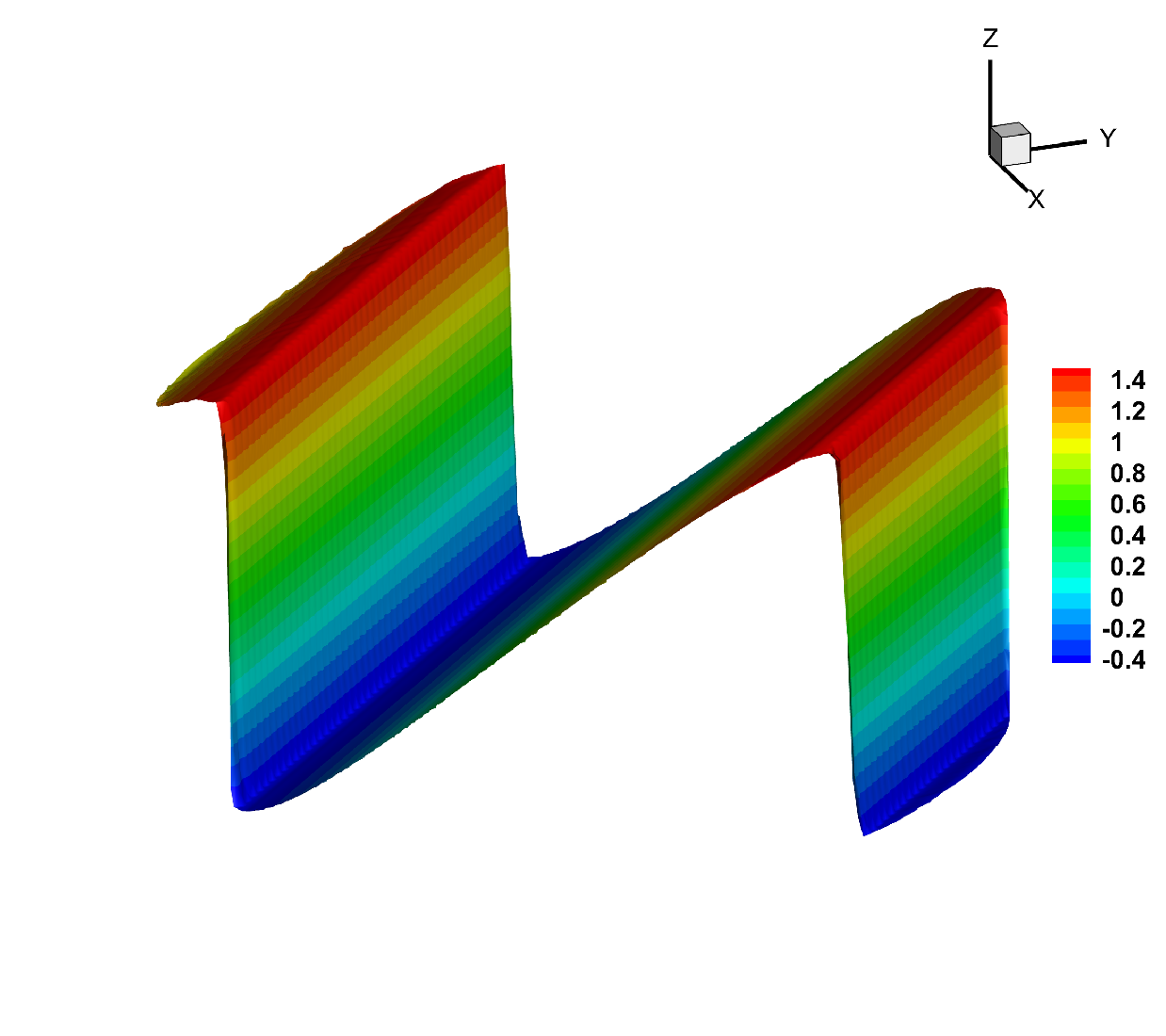} \quad
		\includegraphics[width=0.25\textwidth,height=0.20\textwidth]{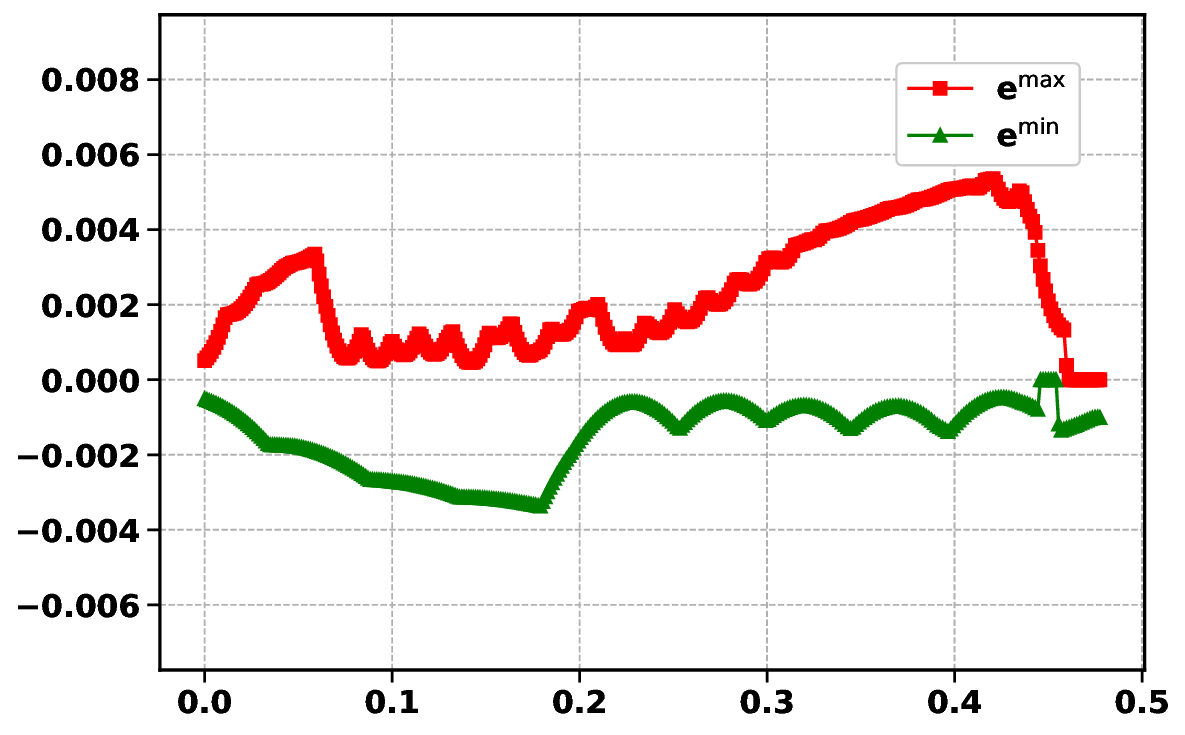}
		\caption{Results of the Burgers' equation at $T=1.5/\pi$ from our proposed method. Left: $P^3$ solution in 2D plot. Middle: 3D plot.  Right: MPP: $e^{\mm}$ and $e^\MM$ versus time~$t$.  
		}
		\label{fig:burgersd_result}
	\end{center}
\end{figure}

For comparison, we also present the numerical values on the Gaussian quadrature points for the $P^3$ approximation with different combinations of limiters at time $T=\frac{1.5}{\pi}$. Comparing the left panel of Figure~\ref{fig:burgersd_result_polylimit} with the right panel of Figure~\ref{fig:burgersd_result}, we observe that the MPP flux limiter preserves the bounds for the mean values of macro-elements  $\uhm\in[\um,\uM]$, whereas the point values of the DG polynomial may still violate the MPP property. The middle figure of Figure~\ref{fig:burgersd_result_polylimit} shows the results obtained by applying both the MPP flux limiter and the bound-preserving polynomial limiter only at the final stage of each Runge-Kutta step. In this setting, the method captures the shock sharply and keeps both the mean values $\uhm$ and the values of $u_h$ on the Gaussian quadrature points within the admissible interval $[\um,\uM]$. The right figure of Figure~\ref{fig:burgersd_result_polylimit} presents the results obtained by the $P^3$ approximation with all three limiters. In this case, both $\uhm$ and the quadrature-point values of $u_h$ remain within $[\um,\uM]$. However, the limiters introduce a more restrictive correction, leading to a slightly more diffusive method.
\begin{figure}[!htb]
	\begin{center}
		\includegraphics[width=0.26\textwidth,height=0.22\textwidth]{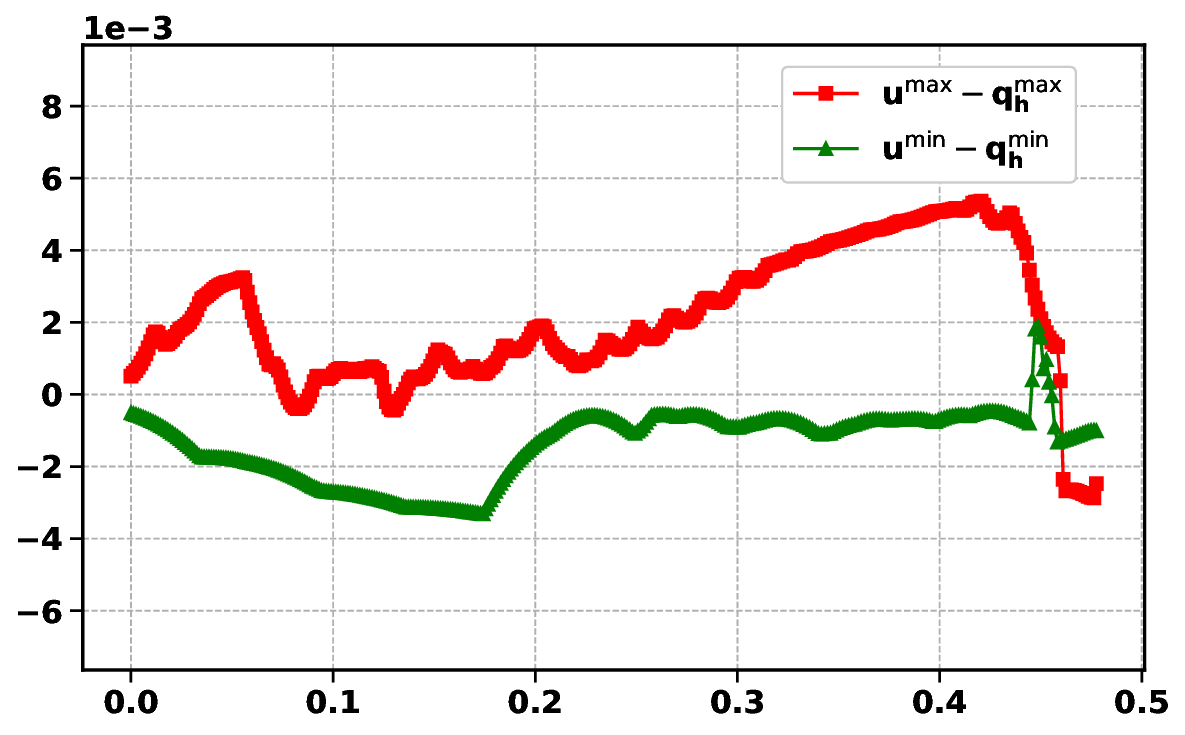}
		\includegraphics[width=0.26\textwidth,height=0.22\textwidth]{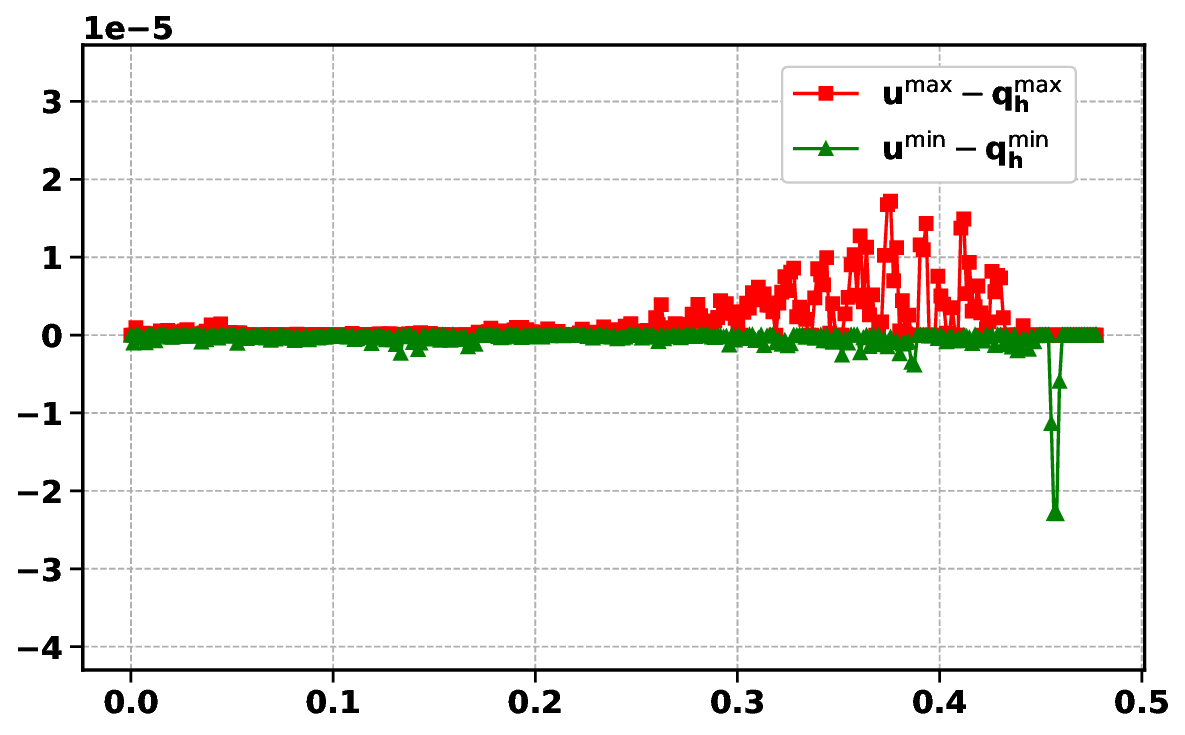}
		\includegraphics[width=0.26\textwidth,height=0.22\textwidth]{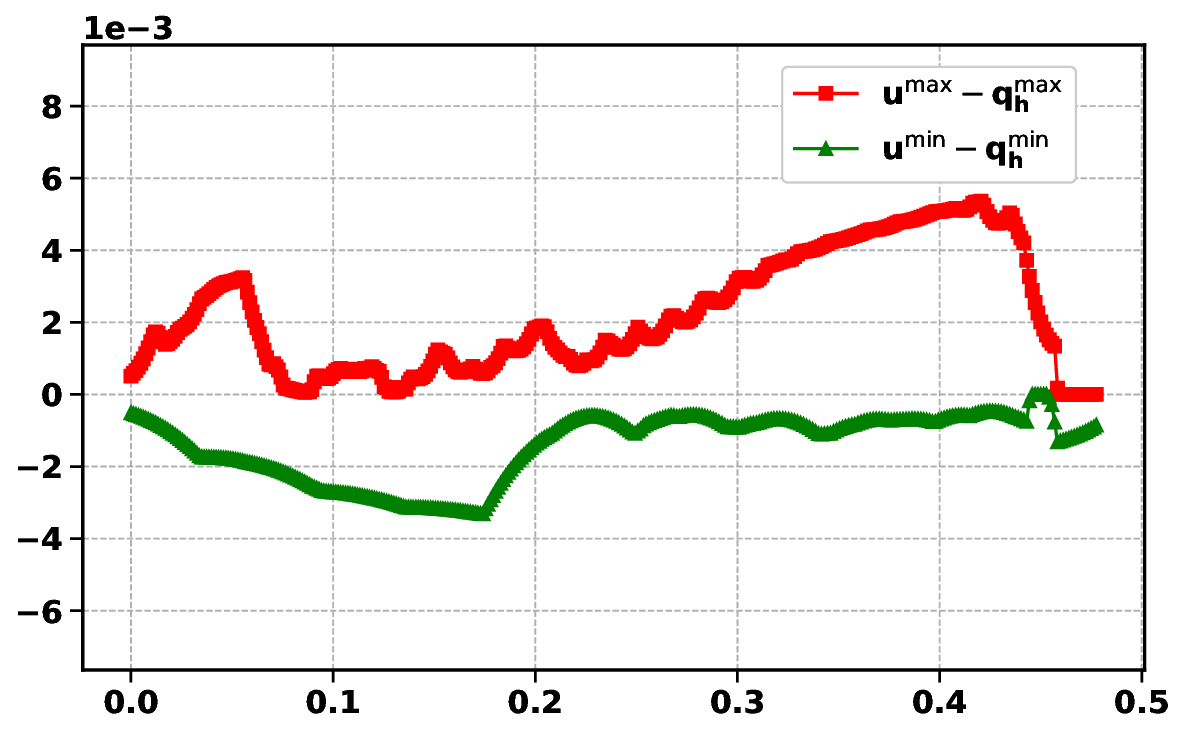}
		\caption{The difference of $u^{\mathrm{min}}-q_h^{\mathrm{min}}$ and $u^{\mathrm{max}}-q_h^{\mathrm{max}}$ from $P^3$ solution at $T=1.5/\pi$ with different combinations of limiters. Left: flux limiter with the Barth Jespersen limiter. Middle: flux limiter with the bound-preserving limiter. Right: flux limiter with both the bound preserving and Barth Jespersen limiters.
		}
		\label{fig:burgersd_result_polylimit}
	\end{center}
\end{figure}

Next, we also test Burgers' equation by the proposed method but using the flux splitting formulation \eqref{scheme:2D:ah:split}.  The $L^2$ and $L^\infty$ errors are shown in Figure \ref{fig:burgers result for smooth with flux splitting} which indicates this formulation with our proposed MPP flux limiting strategy can also get the expected optimal convergence and MPP property for the mean values. The numerical results show that this method yield nearly identical solutions with our proposed method, indicating that they are highly consistent with each other and the numerical quadrature is sufficiently accurate in our computation.
\begin{figure}[!htb]
	\begin{center}
		\includegraphics[width=0.3\textwidth]{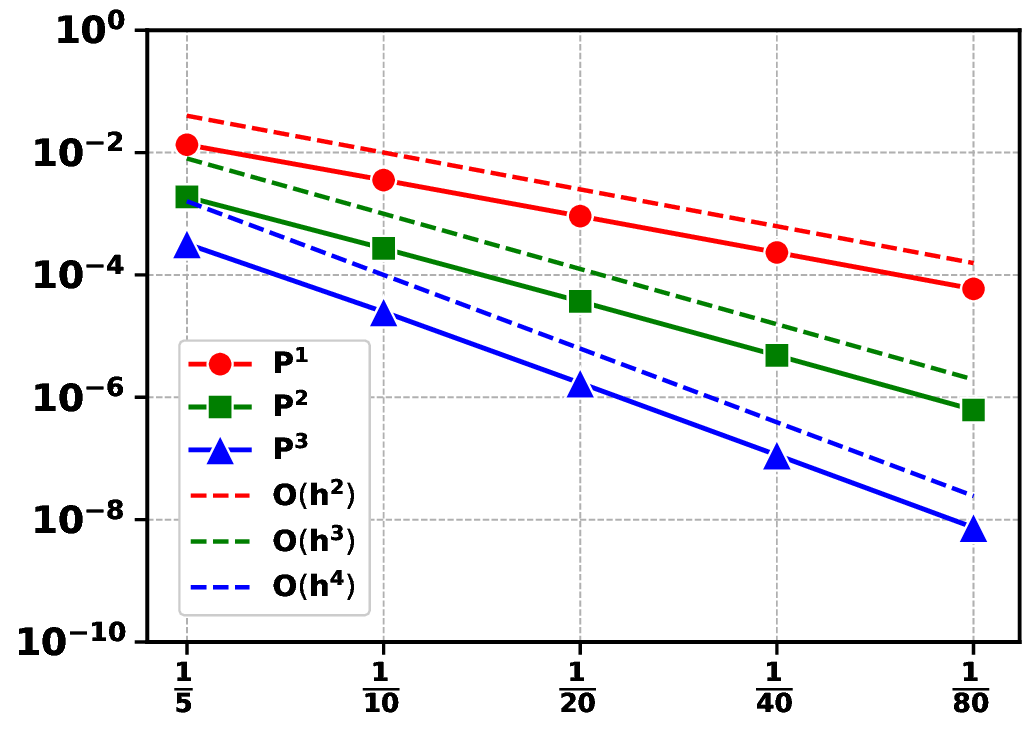} \qquad\qquad
		\includegraphics[width=0.3\textwidth]{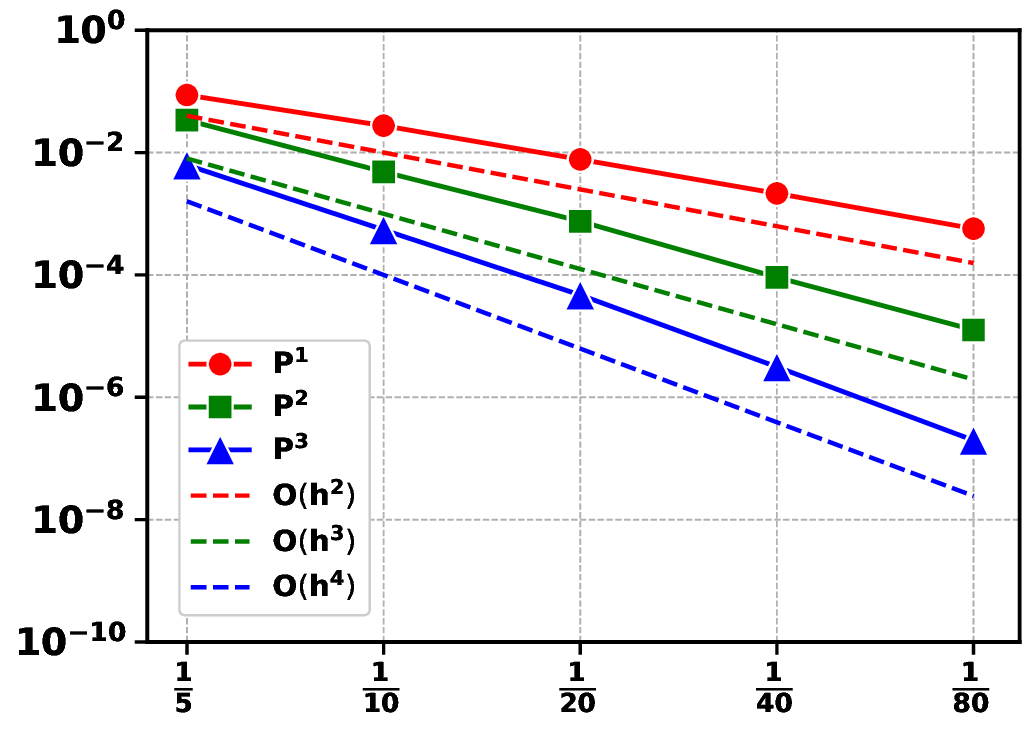}
		\caption{Errors and convergence orders for the $P^1$, $P^2$, and $P^3$ CutDG schemes applied to the Burgers' equation. The flux limiter is employed, together with a flux-splitting formulation \eqref{scheme:2D:ah:split}. Left: L$^2$ errors, Right: L$^{\infty}$ errors.}
		\label{fig:burgers result for smooth with flux splitting}
	\end{center}
\end{figure}
\begin{figure}[!htb]
	\begin{center}
		\includegraphics[width=0.3\textwidth]{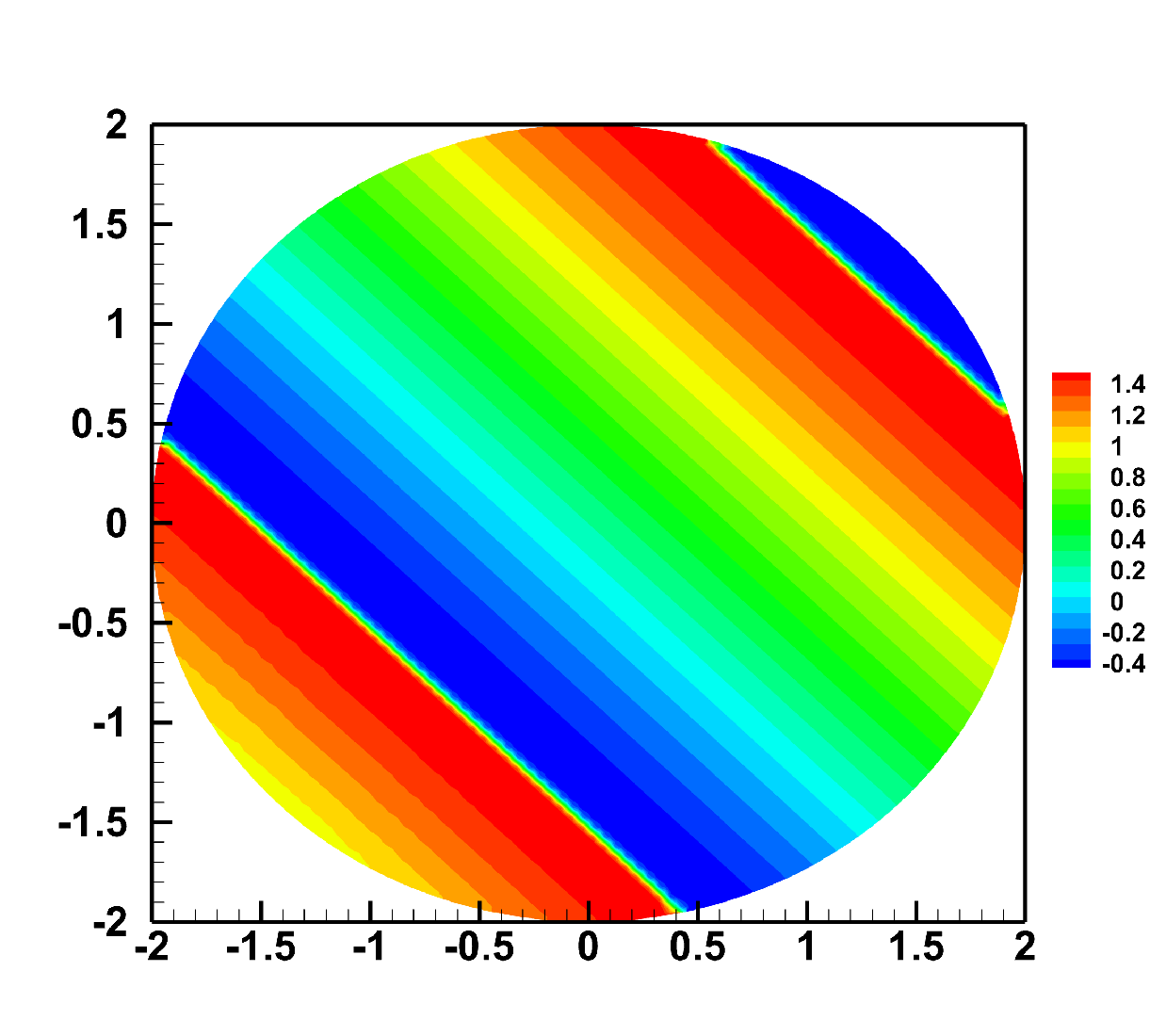}
		\includegraphics[width=0.3\textwidth]{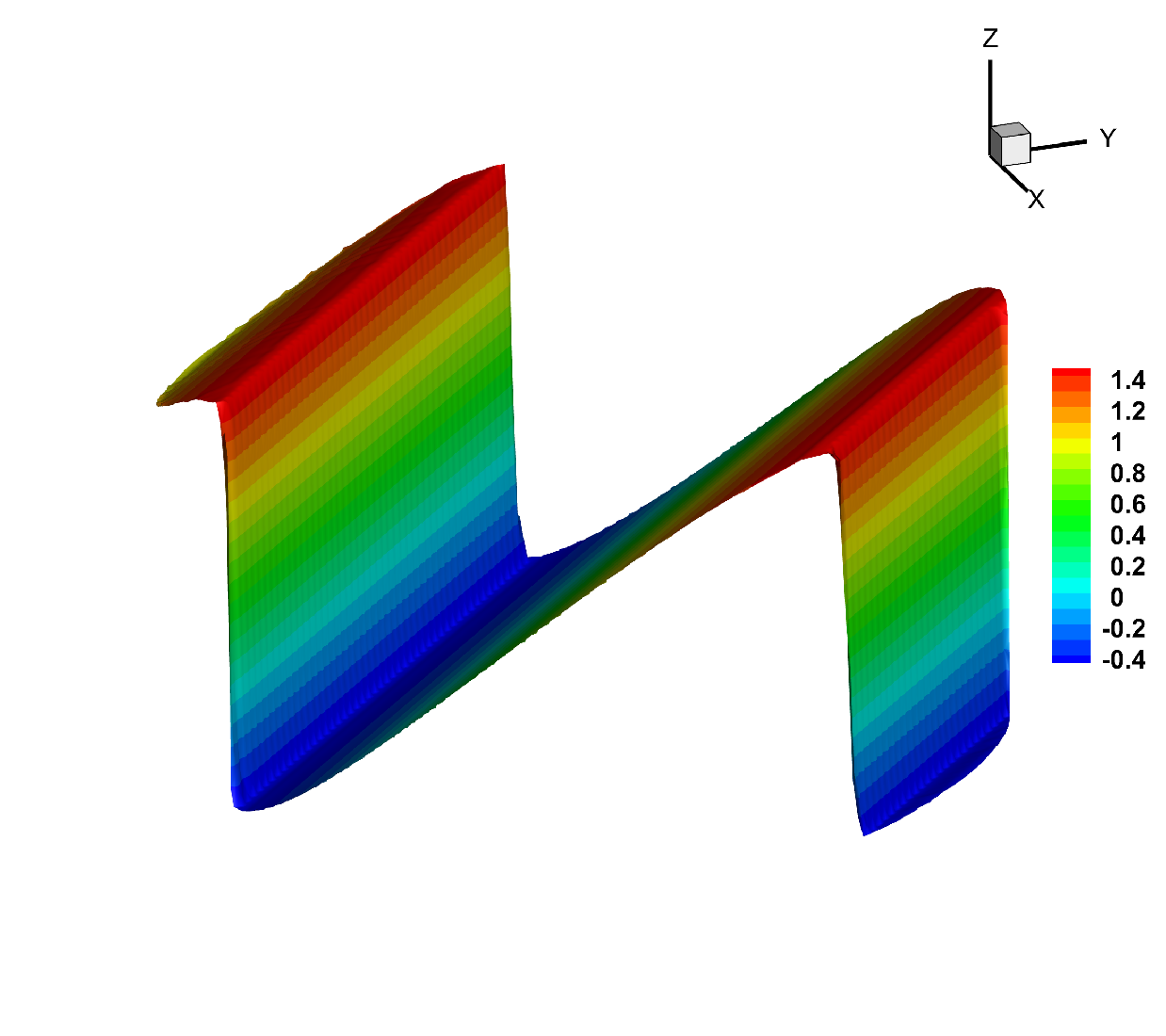}
		\includegraphics[width=0.3\textwidth,height=0.24\textwidth]{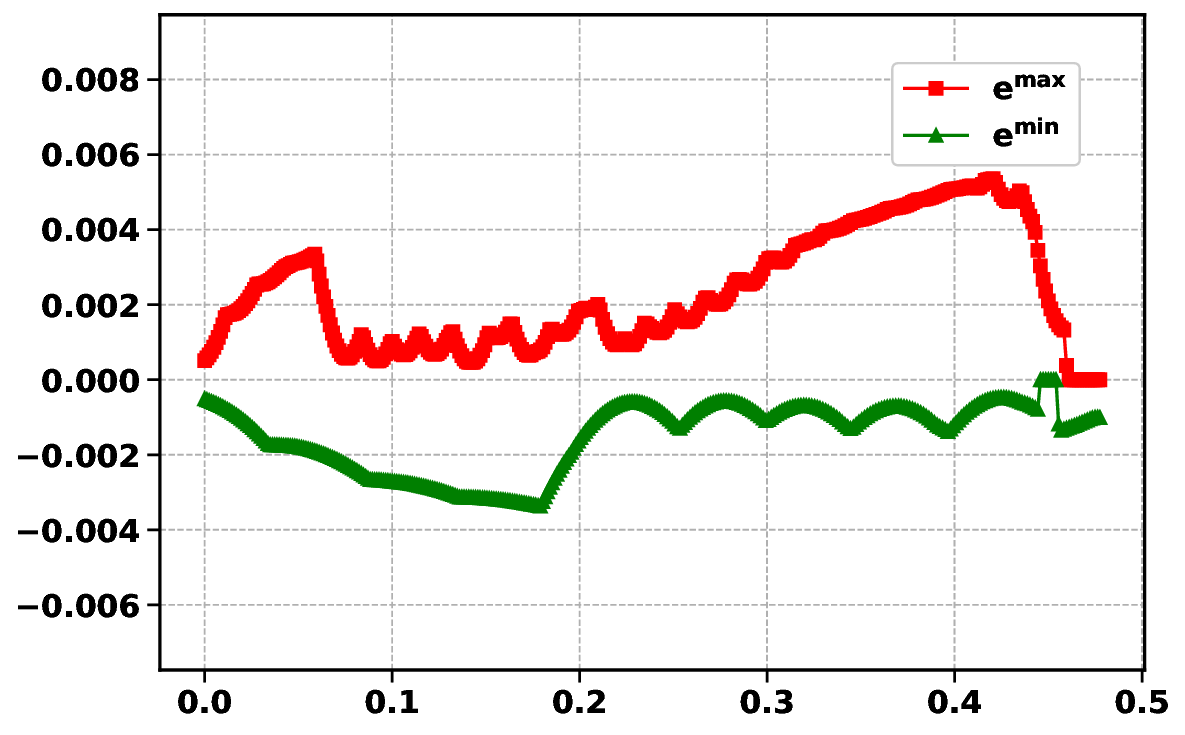}
		\caption{$P^3$ solution of the Burgers' equation at $T=1.5/\pi$ from the scheme with bilinear form  \eqref{scheme:2D:ah:split}. The flux limiter and the BJ limiter are used. Left: solution in 2D plot; middle: solution in 3D plot; right: $e^{\mm}$ and $e^\MM$ versus time~$t$ to show MPP property.
		}
		\label{fig:burgersd_result_barth_split}
	\end{center}
\end{figure}

\section{Conclusion}
\label{sec:conclusion}
We have presented a family of high-order CutDG methods for scalar hyperbolic conservation laws on complex domains in two and three space dimensions. A central contribution of this work is the extension of limiter techniques from standard DG methods to the CutFEM setting through the use of macro-elements. In particular, we introduced a macro-element-based flux-limiting strategy and adapted the Zhang-Shu bound-preserving limiter and the Barth--Jespersen slope limiter to unfitted meshes. The resulting fully discrete scheme preserves the maximum principle and suppresses spurious oscillations near discontinuities. For the semi-discrete scheme, we established an $L^2$-stability result for both periodic and inflow--outflow boundary conditions. Numerical experiments in two and three space dimensions demonstrated optimal convergence rates for smooth solutions and robust shock-capturing performance for discontinuous problems. In particular, the time-step restriction remains comparable to that of the corresponding fitted DG method and is essentially independent of the cut configuration. The proposed methodology is also applicable to systems of conservation laws. Extending the framework to the Euler equations is the subject of ongoing work. 

\appendix

\section*{Acknowledgments}
Pei Fu was partially supported by the National Natural Science Foundation of China No. 12301470 and by the Natural Science Foundation of Jiangsu Province No. BK20230869, the Fundamental Research Funds for the Central Universities, China, No.25024. Sara Zahedi was supported by Wallenberg Academy Fellow Grants KAW 2019.0190 and KAW 2024.0234.  This material is based upon work supported by the Swedish Research Council under Grant No. 2021-06594 while the authors were at Institut Mittag-Leffler, Djursholm, Sweden, during the Fall 2025 program.

\bibliographystyle{siam}
\bibliography{CutDG}

@article{LarZah23,
	Author = {Mats G. Larson and Sara Zahedi},
	Journal = {Comput. Methods Appl. Mech. Engrg.},
	Pages = {116141},
	Title = {Conservative cut finite element methods using macroelements},
	Volume = {414},
	Year = {2023}
	}

@article{shu2009discontinuous,
  title={{Discontinuous Galerkin methods: general approach and stability}},
  author={Shu, Chi-Wang},
  journal={Numerical solutions of partial differential equations},
  pages={149--201},
  year={2009},
  publisher={Birkh{\"a}user Basel}
}

@article{ShuDG4,
  title={{The Runge-Kutta local projection discontinuous Galerkin finite element method for conservation laws. IV. The multidimensional case}},
  author={Cockburn, Bernardo and Hou, Suchung and Shu, Chi-Wang},
  journal={Mathematics of Computation},
  volume={54},
  number={190},
  pages={545--581},
  year={1990}
}

@article{jiang1994cell,
  title={{On a cell entropy inequality for discontinuous Galerkin methods}},
  author={Jiang, Guang Shan and Shu, Chi-Wang},
  journal={Mathematics of Computation},
  volume={62},
  number={206},
  pages={531--538},
  year={1994}
}

@article{colella2006cartesian,
  title={A Cartesian grid embedded boundary method for hyperbolic conservation laws},
  author={Colella, Phillip and Graves, Daniel T and Keen, Benjamin J and Modiano, David},
  journal={Journal of Computational Physics},
  volume={211},
  number={1},
  pages={347--366},
  year={2006},
  publisher={Elsevier}
}

@article{berger2012simplified,
  title={A simplified h-box method for embedded boundary grids},
  author={Berger, Marsha and Helzel, Christiane},
  journal={SIAM Journal on Scientific Computing},
  volume={34},
  number={2},
  pages={A861--A888},
  year={2012},
  publisher={SIAM}
}

@Article{Johansson2013,
author="Johansson, August
and Larson, Mats G.",
title={A high order discontinuous {Galerkin Nitsche} method for elliptic problems with fictitious boundary},
journal="Numerische Mathematik",
year="2013",
month="Apr",
day="01",
volume="123",
number="4",
pages="607--628",
issn="0945-3245",
}

@article{kummer2017extended,
  title={{Extended discontinuous {Galerkin} methods for two-phase flows: the spatial discretization}},
  author={Kummer, Florian},
  journal={International Journal for Numerical Methods in Engineering},
  volume={109},
  number={2},
  pages={259--289},
  year={2017},
  publisher={Wiley Online Library}
}

@inproceedings{modisette2010toward,
  title={Toward a robust, higher-order cut-cell method for viscous flows},
  author={Modisette, James and Darmofal, David},
  booktitle={48th AIAA Aerospace Sciences Meeting Including the New Horizons Forum and Aerospace Exposition},
  pages={721},
  year={2010}
}

@article{muller2017high,
  title={A high-order discontinuous {Galerkin} method for compressible flows with immersed boundaries},
  author={M{\"u}ller, Bj{\"o}rn and Kr{\"a}mer-Eis, Stephan and Kummer, Florian and Oberlack, Martin},
  journal={International Journal for Numerical Methods in Engineering},
  volume={110},
  number={1},
  pages={3--30},
  year={2017},
  publisher={Wiley Online Library}
}

@article{Qin2013,
title={{A discontinuous Galerkin method for solutions of the Euler equations on Cartesian grids with embedded geometries}},
  author={Qin, Ruibin and Krivodonova, Lilia},
  journal={Journal of Computational Science},
  volume={4},
  number={1-2},
  pages={24--35},
  year={2013},
  publisher={Elsevier}
}

@article{gurkan2019stabilized,
  title={A stabilized cut discontinuous Galerkin framework for elliptic boundary value and interface problems},
  author={G{\"u}rkan, Ceren and Massing, Andr{\'e}},
  journal={Computer Methods in Applied Mechanics and Engineering},
  volume={348},
  pages={466--499},
  year={2019},
  publisher={Elsevier}
}

@article{gurkan2020stabilized,
  title={Stabilized Cut Discontinuous {Galerkin} Methods for Advection-Reaction Problems},
  author={G\"{u}rkan, Ceren and Sticko, Simon and Massing, Andr\'{e}},
  journal={SIAM Journal on Scientific Computing},
  volume={42},
  number={5},
  pages={A2620--A2654},
  year={2020},
  publisher={SIAM}
}

@article{burman2025cut,
  title={Cut finite element methods},
  author={Burman, Erik and Hansbo, Peter and Larson, Mats G and Zahedi, Sara},
  journal={Acta Numerica},
  volume={34},
  pages={1--121},
  year={2025},
  publisher={Cambridge University Press}
}

@article{gottlieb2001strong,
  title={Strong stability-preserving high-order time discretization methods},
  author={Gottlieb, Sigal and Shu, Chi-Wang and Tadmor, Eitan},
  journal={SIAM review},
  volume={43},
  number={1},
  pages={89--112},
  year={2001},
  publisher={SIAM}
}

@article{fu2021high,
author = {Fu, Pei and Kreiss, Gunilla},
title = {High Order Cut Discontinuous {Galerkin} Methods for Hyperbolic Conservation Laws in One Space Dimension},
journal = {SIAM Journal on Scientific Computing},
volume = {43},
number = {4},
pages = {A2404-A2424},
year = {2021},
doi = {10.1137/20M1349060}
}

@article{burman2010ghost,
  title={Ghost penalty},
  author={Burman, Erik},
  journal={Comptes Rendus Mathematique},
  volume={348},
  number={21-22},
  pages={1217--1220},
  year={2010},
  publisher={Elsevier}
}

@article{larson2021conservative,
  title={Conservative Discontinuous Cut Finite Element Methods},
  author={Mats G. Larson and Sara Zahedi},
  journal={arXiv preprint arXiv:2105.02202},
  year={2021}
}

@inproceedings{barth1989design,
  title={{The design and application of upwind schemes on unstructured meshes}},
  author={Barth, Timothy and Jespersen, Dennis},
  booktitle={27th Aerospace sciences meeting},
  pages={366},
  year={1989}
}

@article{fu2024bound,
  title={A bound preserving cut discontinuous Galerkin method for one dimensional hyperbolic conservation laws},
  author={Fu, Pei and Kreiss, Gunilla and Zahedi, Sara},
  journal={ESAIM: Mathematical Modelling and Numerical Analysis},
  volume={58},
  number={5},
  pages={1651--1680},
  year={2024},
  publisher={EDP Sciences}
}

@article{zalesak1979fully,
  title={Fully multidimensional flux-corrected transport algorithms for fluids},
  author={Zalesak, Steven T},
  journal={Journal of computational physics},
  volume={31},
  number={3},
  pages={335--362},
  year={1979},
  publisher={Elsevier}
}

@article{xiong2015high,
  title={High order maximum-principle-preserving discontinuous Galerkin method for convection-diffusion equations},
  author={Xiong, Tao and Qiu, Jing-Mei and Xu, Zhengfu},
  journal={SIAM Journal on Scientific Computing},
  volume={37},
  number={2},
  pages={A583--A608},
  year={2015},
  publisher={SIAM}
}

@article{xu2014parametrized,
  title={Parametrized maximum principle preserving flux limiters for high order schemes solving hyperbolic conservation laws: one-dimensional scalar problem},
  author={Xu, Zhengfu},
  journal={Mathematics of Computation},
  volume={83},
  number={289},
  pages={2213--2238},
  year={2014}
}

@article{CHRISTLIEB2015334,
title = {High order parametrized maximum-principle-preserving and positivity-preserving {WENO} schemes on unstructured meshes},
journal = {Journal of Computational Physics},
volume = {281},
pages = {334-351},
year = {2015},
issn = {0021-9991},
doi = {https://doi.org/10.1016/j.jcp.2014.10.029},
url = {https://www.sciencedirect.com/science/article/pii/S0021999114007116},
author = {Andrew J. Christlieb and Yuan Liu and Qi Tang and Zhengfu Xu}
}

@article{zhang2010maximum,
  title={On maximum-principle-satisfying high order schemes for scalar conservation laws},
  author={Zhang, Xiangxiong and Shu, Chi-Wang},
  journal={Journal of Computational Physics},
  volume={229},
  number={9},
  pages={3091--3120},
  year={2010},
  publisher={Elsevier}
}

@article{zhang2010positivity,
  title={{On positivity-preserving high order discontinuous Galerkin schemes for compressible Euler equations on rectangular meshes}},
  author={Zhang, Xiangxiong and Shu, Chi-Wang},
  journal={Journal of Computational Physics},
  volume={229},
  number={23},
  pages={8918--8934},
  year={2010},
  publisher={Elsevier}
}

@article{Zhang2011MaximumPrincipleSatisfyingAP,
  title={Maximum-Principle-Satisfying and Positivity-Preserving High Order Discontinuous {Galerkin} Schemes for Conservation Laws on Triangular Meshes},
  author={Xiangxiong Zhang and Yinhua Xia and Chi-Wang Shu},
  journal={Journal of Scientific Computing},
  year={2011},
  volume={50},
  pages={29 - 62},
  url={https://api.semanticscholar.org/CorpusID:17890908}
}

@article{10.1098/rspa.2011.0153,
    author = {Zhang, Xiangxiong and Shu, Chi-Wang},
    title = {Maximum-principle-satisfying and positivity-preserving high-order schemes for conservation laws: survey and new developments},
    journal = {Proceedings of the Royal Society A: Mathematical, Physical and Engineering Sciences},
    volume = {467},
    number = {2134},
    pages = {2752-2776},
    year = {2011},
    month = {05},
    issn = {1364-5021},
    doi = {10.1098/rspa.2011.0153},
    url = {https://doi.org/10.1098/rspa.2011.0153},
    eprint = {https://royalsocietypublishing.org/rspa/article-pdf/467/2134/2752/801033/rspa.2011.0153.pdf},
}

@article{liu1996nonoscillatory,
  title={Nonoscillatory high order accurate self-similar maximum principle satisfying shock capturing schemes I},
  author={Liu, Xu-Dong and Osher, Stanley},
  journal={SIAM Journal on Numerical Analysis},
  volume={33},
  number={2},
  pages={760--779},
  year={1996},
  publisher={SIAM}
}

@article{tan2010inverse,
  title={Inverse Lax-Wendroff procedure for numerical boundary conditions of conservation laws},
  author={Tan, Sirui and Shu, Chi-Wang},
  journal={Journal of Computational Physics},
  volume={229},
  number={21},
  pages={8144--8166},
  year={2010},
  publisher={Elsevier}
}

@article{may2022dod,
  title={DoD stabilization for non-linear hyperbolic conservation laws on cut cell meshes in one dimension},
  author={May, Sandra and Streitb{\"u}rger, Florian},
  journal={Applied Mathematics and Computation},
  volume={419},
  pages={126854},
  year={2022},
  publisher={Elsevier}
}

@article{engwer2020stabilized,
  title={A stabilized DG cut cell method for discretizing the linear transport equation},
  author={Engwer, Christian and May, Sandra and N{\"u}{\ss}ing, Andreas and Streitb{\"u}rger, Florian},
  journal={SIAM Journal on Scientific Computing},
  volume={42},
  number={6},
  pages={A3677--A3703},
  year={2020},
  publisher={SIAM}
}

@article{berger2021state,
  title={A state redistribution algorithm for finite volume schemes on cut cell meshes},
  author={Berger, Marsha and Giuliani, Andrew},
  journal={Journal of computational physics},
  volume={428},
  pages={109820},
  year={2021},
  publisher={Elsevier}
}

@article{giuliani2022two,
  title={A two-dimensional stabilized discontinuous Galerkin method on curvilinear embedded boundary grids},
  author={Giuliani, Andrew},
  journal={SIAM Journal on Scientific Computing},
  volume={44},
  number={1},
  pages={A389--A415},
  year={2022},
  publisher={SIAM}
}

@article{berger2003h,
  title={H-box methods for the approximation of hyperbolic conservation laws on irregular grids},
  author={Berger, Marsha J and Helzel, Christiane and LeVeque, Randall J},
  journal={SIAM Journal on Numerical Analysis},
  volume={41},
  number={3},
  pages={893--918},
  year={2003},
  publisher={SIAM}
}

@article{schoeder2020high,
  title={High-order cut discontinuous Galerkin methods with local time stepping for acoustics},
  author={Schoeder, Svenja and Sticko, Simon and Kreiss, Gunilla and Kronbichler, Martin},
  journal={International Journal for Numerical Methods in Engineering},
  volume={121},
  number={13},
  pages={2979--3003},
  year={2020},
  publisher={Wiley Online Library}
}

@article{spiteri2002new,
  title={A new class of optimal high-order strong-stability-preserving time discretization methods},
  author={Spiteri, Raymond J and Ruuth, Steven J},
  journal={SIAM Journal on Numerical Analysis},
  volume={40},
  number={2},
  pages={469--491},
  year={2002},
  publisher={SIAM}
}

@article{li2020discontinuous,
  title={Discontinuous Galerkin Methods for Nonlinear Scalar Conservation Laws: Generalized Local Lax--Friedrichs Numerical Fluxes},
  author={Li, Jia and Zhang, Dazhi and Meng, Xiong and Wu, Boying and Zhang, Qiang},
  journal={SIAM Journal on Numerical Analysis},
  volume={58},
  number={1},
  pages={1--20},
  year={2020},
  publisher={SIAM}
}
\end{document}